\newtheorem{thm}{Theorem}[section]
\newtheorem{cor}[thm]{Corollary}
\newtheorem{lem}[thm]{Lemma}
\theoremstyle{definition}
\newtheorem{defi}[thm]{Definition}
\newtheorem*{case1}{Case 1}
\newtheorem*{case2}{Case 2}
\newcommand{\N}{\mathbb{N}}
\newcommand{\R}{\mathbb{R}}
\newcommand{\X}{\mathcal{X}}
\newcommand{\varep}{\varepsilon}
\newcommand{\U}{\mathcal{U}}
\newcommand{\V}{\mathcal{V}}
\DeclareMathOperator{\fin}{FIN}
\DeclareMathOperator{\supp}{supp}
\DeclareMathOperator{\stem}{stem}
\DeclareMathOperator{\osc}{osc}
\begin{document}

\title{An infinite-dimensional version of Gowers' $\mathrm{FIN}_{\pm k}$ theorem}
\author[J. K. Kawach]{Jamal K. Kawach}
\date{\today}
\address{Department of Mathematics\\ University of Toronto\\ Toronto, Ontario, M5S 2E4, Canada.}
\email{jamal.kawach@mail.utoronto.ca}
\subjclass[2010]{Primary 05D10; Secondary 03E05, 20M99, 46B20.}
\keywords{Gowers' theorem, infinite block sequences, ultrafilters, $\U$-trees, ultra-Ramsey theory, oscillation stability}

\begin{abstract}
We prove an infinite-dimensional version of an approximate Ramsey theorem of Gowers, initially used to show that every Lipschitz function on the unit sphere of $c_0$ is oscillation stable. To do so, we use the theory of ultra-Ramsey spaces developed by Todorcevic in order to obtain an Ellentuck-type theorem for the space of all infinite block sequences in $\mathrm{FIN}_{\pm k}$.
\end{abstract}

\maketitle

\section{Introduction}
Let $X$ be a Banach space and let $S_X$ be its unit sphere. A function $f : S_X \rightarrow \R$ is \emph{oscillation stable} if for every $\varep > 0$ and every closed infinite-dimensional subspace $Y$ of $X$ there is a closed infinite-dimensional subspace $Z$ of $Y$ such that $$\osc(f, S_Z) := \sup\{|f(x) - f(y)| : x,y \in S_Z\} < \varep.$$ Gowers' $c_0$ theorem, originally proved in \cite{G}, states that every Lipschitz (or, more generally, uniformly continuous) function $f : S_{c_0} \rightarrow \R$ is oscillation stable. The proof of this theorem relies on a Ramsey-type result about the space of all finitely-supported functions $p : \omega \rightarrow \{0, \pm 1, \dots, \pm k\}$ which take at least one of the values $\pm k$. The main goal of this note is to extend this latter result to its natural infinite-dimensional analogue (Theorem \ref{mainthm} below).

Before we can state these results, we fix some notation. Let $\omega$ denote the set of all non-negative integers, and $\N$ the set of all positive integers. We will often identify each ordinal $m < \omega$ with the set $\{0, \dots, m-1\}$ of its predecessors. Given $k \in \N$, let $\fin_{\pm k}$ denote the set of all functions $p: \omega \rightarrow \{0, \pm 1, \dots, \pm k\}$ such that $$\supp p := \{n < \omega : p(n) \neq 0\}$$ is finite and such that $p$ achieves at least one of the values $\pm k$. Given $p,q \in \fin_{\pm k}$ we write $p < q$ whenever $\max \supp p < \min \supp q$. In this case we will write $p+ q$ for the element of $\fin_{\pm k}$ given by the coordinate-wise sum of $p$ and $q$. This operation gives $\fin_{\pm k}$ the structure of a partial semigroup.

We also have an operation between various $\fin$ spaces: The \emph{tetris operation} $T : \fin_{\pm k} \rightarrow \fin_{\pm (k - 1)}$ is defined by
\[ T(p)(n) := \begin{cases}
	p(n) - 1 & \text{ if $p(n) > 0$},\\
	0 & \text{ if $p(n) = 0$},\\
	p(n) + 1 & \text{ if $p(n) < 0$}.
	\end{cases}
\]
(The above terminology was not used by Gowers in \cite{G} but was introduced by Todorcevic in \cite{T} and has since become standard.) It is easy to check that $T$ is a surjective homomorphism of partial semigroups. For $\alpha \leq \omega$, a sequence $(p_n)_{n < \alpha}$ is a \emph{block sequence in $\fin_{\pm k}$} if $p_n \in \fin_{\pm k}$ and $p_n < p_m$ for all $n < m < \alpha$. Given a block sequence $P = (p_n)_{n < \alpha}$ in $\fin_{\pm k}$, the \emph{partial subsemigroup of $\fin_{\pm k}$ generated by $P$} is defined as
\begin{equation*}
\begin{split}
\langle P \rangle_{\pm k} := & \{\varep_0 T^{j_0} (p_{n_0}) + \dots + \varep_m T^{j_m} (p_{n_m}) : m < \omega, n_0 < \dots < n_m < \alpha, \\
& \varep_0, \dots, \varep_m \in \{\pm 1\},  j_0, \dots, j_m < k \text{ and $\min j_i = 0$}\}.
\end{split}
\end{equation*} If $Q = (q_n)_{n < \beta}$, $\beta \leq \alpha$ is another block sequence, we write $Q \leq P$ and say $Q$ is a \emph{block subsequence of $P$} whenever $q_n \in \langle P \rangle_{\pm k}$ for all $n < \beta$.

We will work exclusively with the $\ell_\infty$ norm given by $$||p|| :=  \sup_{n < \omega} |p(n)|$$ where $p \in \fin_{\pm k}$ and $k \in \N$. For a subset $A \subseteq \fin_{\pm k}$ and $\varep > 0$, define $$(A)_\varep := \{p \in \fin_{\pm k} : (\exists q\in A) \, ||p - q|| \leq \varep\}.$$ We can now state the following theorem of Gowers, originally proved in \cite{G} using the theory of idempotent ultrafilters in order to show that every real-valued Lipschitz function on $S_{c_0}$ is oscillation stable (see \cite{AT,K,T} for other proofs).

\begin{thm}[Gowers]\label{gowersthm} For every $k, r \in \N$ and every $c: \fin_{\pm k} \rightarrow r$ there is $i < r$ such that $\left(c^{-1}\{i\}\right)_1$ contains a partial subsemigroup of $\fin_{\pm k}$ generated by an infinite block sequence. \end{thm}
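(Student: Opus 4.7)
The plan is to follow Gowers' original approach via idempotent ultrafilters on the partial semigroup $(\fin_{\pm k}, +)$, as streamlined by Todorcevic in \cite{T}. Let $\gamma \fin_{\pm k}$ denote the set of cofinite ultrafilters on $\fin_{\pm k}$, i.e., those containing every tail set $\{q : p < q\}$. The extension $A \in \U + \V \iff \{p : \{q : p + q \in A\} \in \V\} \in \U$ makes $\gamma \fin_{\pm k}$ into a compact right-topological semigroup, so by the Ellis--Numakura lemma it contains idempotents. Moreover, the tetris operation extends to a continuous surjective semigroup homomorphism $T : \gamma \fin_{\pm k} \rightarrow \gamma \fin_{\pm(k-1)}$ via $A \in T(\U) \iff T^{-1}(A) \in \U$.

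I would then construct a \emph{compatible} tower of idempotents, consisting of one $\U_j \in \gamma \fin_{\pm j}$ for each $1 \leq j \leq k$, satisfying $T(\U_j) = \U_{j-1}$ together with cross-level compatibility relations of the form $\U_i + \U_j = \U_{\max(i,j)}$ (interpreted after lifting through the tetris maps). Starting from any idempotent $\U_1 \in \gamma \fin_{\pm 1}$, the preimage $T^{-1}\{\U_{j-1}\}$ in $\gamma \fin_{\pm j}$ is a closed sub-semigroup by continuity and the homomorphism property, hence contains an idempotent $\U_j$ by a further application of Ellis--Numakura. Iterating yields the tower. The cross-level compatibility is then arranged by passing to a minimal closed sub-semigroup of $\gamma \fin_{\pm k}$ containing all the needed preimages and closed under the sign involution $p \mapsto -p$.

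Given the coloring $c : \fin_{\pm k} \rightarrow r$, the ultrafilter property of $\U_k$ yields some $i < r$ with $A := c^{-1}\{i\} \in \U_k$. I would recursively construct a block sequence $(p_n)_{n < \omega}$ in $\fin_{\pm k}$ such that $\langle (p_n) \rangle_{\pm k} \subseteq (A)_1$. At stage $n$, with $p_0 < \dots < p_{n-1}$ already chosen, I use the idempotent and compatibility relations to argue that the set of admissible candidates $p_n$ --- those satisfying $p_{n-1} < p_n$ and keeping every new combination $\varep_0 T^{j_0}(p_{n_0}) + \dots + \varep_s T^{j_s}(p_{n_s})$ (with $n_s = n$, $\min j_i = 0$) within $\ell_\infty$-distance $1$ of $A$ --- is the intersection of finitely many sets, each belonging to the appropriate $\U_j$.

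The main obstacle will be the simultaneous interaction of the signs $\varep_i \in \{\pm 1\}$, the tetris levels $j_i < k$ with the constraint $\min j_i = 0$, and the additive idempotent structure. The \emph{approximate} conclusion --- landing in $(A)_1$ rather than $A$ itself --- is not cosmetic but essential: when one invokes $\U_i + \U_j$ at differing levels, the tetris operation introduces an $\ell_\infty$-discrepancy of at most $1$ relative to the underlying idempotent-generated combination, and this is exactly what the unit neighborhood absorbs. Careful bookkeeping of which term supplies the ``$j_i = 0$'' slot (so that the sum genuinely attains $\pm k$ and remains in $\fin_{\pm k}$) and how the sign involution interacts with the compatibility relations is what closes the recursion.
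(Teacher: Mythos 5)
The paper does not prove Theorem~\ref{gowersthm}; it cites \cite{G} and points to the ultrafilter proofs in \cite{AT,K,T}, whose key ingredient is exactly what the paper records as Lemma~\ref{lem1}: a cofinite idempotent $\U$ on $\fin_{\pm k}$ with $\U + (-T)^j\U = (-T)^j\U + \U = \U$ for all $j \le k$, which is moreover \emph{subsymmetric}, i.e.\ $A \in \U \implies -(A)_1 \in \U$. Your outline correctly identifies the general toolbox — cofinite ultrafilters, Ellis--Numakura, a compatible chain of idempotents, and an approximate (rather than exact) conclusion — but the crucial mechanism for handling the signs is missing, and that is where the gap lies. You propose to arrange sign invariance by ``passing to a minimal closed sub-semigroup of $\gamma\fin_{\pm k}$~\dots\ closed under the sign involution.'' This does not deliver what is needed: the map $\sigma(\U) := -\U$ is a continuous period-$2$ automorphism, and $\sigma$-invariance of a closed subsemigroup $S$ gives no guarantee of a $\sigma$-fixed idempotent in $S$ (and indeed the ultrafilter of Lemma~\ref{lem1} is not asserted to satisfy $-\U = \U$). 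The device that actually works is to absorb the sign into the tetris map by working with $-T$ instead of $T$, building the tower along $(-T)$; the resulting idempotent is then automatically subsymmetric in the weaker sense above, and that weaker property is exactly what lets the recursion close.

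A secondary inaccuracy: the error constant $1$ does not come from ``the tetris operation introducing an $\ell_\infty$-discrepancy.'' It comes from a parity mismatch. Since $(-T)^j = T^j$ for $j$ even and $(-T)^j = -T^j$ for $j$ odd, a general summand $\varep\, T^j(p)$ matches some $(-T)^{j'}(p)$ with $j' \in \{j, j+1\}$ up to an error of at most $\|T^j(p) - T^{j+1}(p)\| \le 1$, \emph{provided} some summand already carries the pair $(\varep_i,j_i) = (+1,0)$ so that $\min j_i' = 0$ is preserved. When no such summand exists you must first replace $q$ by $-q$ to reduce to this case, and this is precisely where the subsymmetry $A \in \U \implies -(A)_1 \in \U$ is invoked. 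Without producing a subsymmetric ultrafilter, this step of your recursion has nothing to stand on.
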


It is worth mentioning here that while Gowers' theorem is an approximate Ramsey-theoretic result, there is an exact version (also proved in \cite{G}) for the spaces $\fin_k$ consisting of all finitely-supported functions $p : \omega \rightarrow k + 1$ which achieve the value $k$. This latter result acts as a pigeonhole principle and can be used via the framework of topological Ramsey spaces as in \cite{T} to prove an infinite-dimensional version for the space $\fin_ k^{[\infty]}$ of all infinite block sequences in $\fin_k$, thus generalizing a result of Milliken \cite{M} corresponding to the case $k=1$ (which in turn corresponds to the infinite-dimensional version of Hindman's theorem \cite{H}). Since Theorem \ref{gowersthm} is not an exact Ramsey-theoretic result, it cannot be used directly to prove an infinite-dimensional analogue using the theory of topological Ramsey spaces developed in \cite{T}. Our goal is to show that such an analogue can still be obtained even though there is no pigeonhole principle for $\fin_{\pm k}$.

We will work with multi-dimensional versions of the $\fin_{\pm k}$ spaces defined above. For each $m \in \N$, let $\fin_{\pm k}^{[m]}$ be the set of all block sequences in $\fin_{\pm k}$ of length $m$. We also let $$\fin_{\pm k}^{[< \infty]} := \bigcup_{m \in \N} \fin_{\pm k}^{[m]}$$ be the set of all finite block sequences in $\fin_{\pm k}$. Furthermore, let $\fin_{\pm k}^{[\infty]}$ denote the set of all infinite block sequences in $\fin_{\pm k}$. For each $\alpha \in \N \cup \{\infty\}$ we extend the $\ell_\infty$ norm to a metric on $\fin_{\pm k}^{[\alpha]}$ by setting, for $P = (p_n)_{n < \alpha}$ and $Q = (q_n)_{n<\alpha}$, $$||P-Q|| := \sup_{n < \alpha} ||p_n - q_n||.$$ Finally, for $\alpha  \in \N \cup \{\infty\}$, $\X \subseteq \fin_{\pm k}^{[\alpha]}$ and $\varep > 0$, define $$(\X)_\varep := \{P \in \fin_{\pm k}^{[\alpha]} : (\exists Q\in \X) \,||P-Q|| \leq \varep\}.$$

It is well-known that infinite-dimensional Ramsey-theoretic results do not hold in general for all colourings. To obtain positive results, a topological restriction on the permitted colourings is needed. In our case we work with the \emph{metrizable topology} on $\fin_{\pm k}^{[\infty]}$ which is generated by basic open sets of the form $$[(q_0, \dots, q_{m-1})]  := \{(p_n)_{n < \omega} \in \fin_{\pm k}^{[\infty]} :  q_i = p_i \text{ for all $i < m$}\}$$ where $m < \omega$ and $(q_0, \dots, q_{m-1}) \in \fin_{\pm k}^{[m]}$. This is the topology inherited by $\fin_{\pm k}^{[\infty]}$ when viewed as a subspace of the Tychonov product $\left(\fin_{\pm k}^{[<\infty]}\right)^\omega$ via the natural mapping $$ P = (p_n)_{n< \omega} \mapsto (r_n(P))_{n < \omega}$$ where $r_n(P) := (p_i)_{i < n}$, and where $\fin_{\pm k}^{[<\infty]}$ is given the discrete topology.

We now describe the topological restriction mentioned above. First recall that a \emph{Souslin scheme} is a family of sets $(X_s)_{s \in \omega^{< \omega}}$ indexed by finite sequences of non-negative integers. The \emph{Souslin operation} turns a Souslin scheme $(X_s)_{s \in \omega^{< \omega}}$ into the set $$\bigcup_{x \in \omega^\omega} \bigcap_{n < \omega} X_{x \restriction n}$$ where $\omega^\omega$ denotes the set of all infinite sequences in $\omega$. Given a topological space $X$, the field of \emph{Souslin measurable} sets is the smallest field of subsets of $X$ which contains all open subsets of $X$ and is closed under the Souslin operation. In particular, every analytic (and hence Borel) subset of $X$ is Souslin measurable (see, e.g., \cite[Section 25.C]{Ke}). Finally, a colouring $c : X \rightarrow r$ is Souslin measurable if $c^{-1}\{i\}$ is Souslin measurable for each $i < r$.

Let $\langle P \rangle_{\pm k}^{[\infty]}$ denote the set of all $Q \in \fin_{\pm k}^{[\infty]}$ such that $Q \leq P$. The purpose of this note is to extend Gowers' $\fin_{\pm k}$ theorem to the following analogue for $\fin_{\pm k}^{[\infty]}$. The proof will involve a synthesis of techniques introduced by Todorcevic in \cite{T} and Kanellopoulos in \cite{K}.

\begin{thm}\label{mainthm} For every $k, r \in \N$ and every Souslin measurable $c : \fin_{\pm k}^{[\infty]} \rightarrow r$ there are $i < r$ and an infinite block sequence $P \in \fin_{\pm k}^{[\infty]}$ such that $$\langle P \rangle_{\pm k}^{[\infty]} \subseteq \left(c^{-1}\{i\}\right)_1.$$ \end{thm}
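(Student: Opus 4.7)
The plan is to establish an Ellentuck-type theorem for $\fin_{\pm k}^{[\infty]}$ in the spirit of Todorcevic's ultra-Ramsey spaces, modified to accommodate the approximate nature of Gowers' pigeonhole principle. First, I would fix a nonprincipal ultrafilter $\U$ on $\fin_{\pm k}$ that is idempotent with respect to the partial semigroup operation and invariant under the tetris map $T$; such an ultrafilter exists by applying the Ellis--Numakura lemma to the appropriate compact right-topological semigroup and is precisely the device underlying the ultrafilter proof of Gowers' Theorem \ref{gowersthm}.

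Next I would introduce $\U$-trees: downward-closed subtrees $\mathcal{T} \subseteq \fin_{\pm k}^{[<\infty]}$ such that for every node $s = (p_0, \dots, p_{m-1}) \in \mathcal{T}$, the set $\{p \in \fin_{\pm k} : p_{m-1} < p \text{ and } (p_0, \dots, p_{m-1}, p) \in \mathcal{T}\}$ of admissible one-step extensions belongs to $\U$. Writing $[\mathcal{T}]$ for the set of infinite branches of $\mathcal{T}$, one verifies that every $\U$-tree contains, among its branches, a copy of $\langle P \rangle_{\pm k}^{[\infty]}$ for some infinite block sequence $P$. The central assertion to aim for is an Ellentuck-type dichotomy: for every Souslin measurable $\X \subseteq \fin_{\pm k}^{[\infty]}$ and every $\U$-tree $\mathcal{T}$, there is a $\U$-subtree $\mathcal{T}' \subseteq \mathcal{T}$ such that either $[\mathcal{T}'] \subseteq (\X)_1$ or $[\mathcal{T}'] \cap \X = \emptyset$.

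The combinatorial heart is the base case of this dichotomy, when $\X$ is basic open. I would prove it by a fusion argument, building $\mathcal{T}'$ level-by-level: at each stage, with a stem $s$ fixed, one selects a set in $\U$ to branch over, the choice being governed by a finite colouring of $\fin_{\pm k}$ of the form ``does extending below $s$ steer the branch toward or away from $\X$?''. Here Gowers' Theorem \ref{gowersthm}, in its ultrafilter formulation, supplies the required approximate pigeonhole and contributes precisely the $(\X)_1$ fuzz appearing in the conclusion. Extending the base case to arbitrary Souslin measurable sets follows a variant of the Galvin--Prikry--Silver--Ellentuck template as in \cite{T}, showing that the class of sets satisfying the dichotomy is closed under the Souslin operation via a diagonal fusion indexed by $\omega^{<\omega}$; this is the place where the techniques of Kanellopoulos from \cite{K} (originally used to give a direct proof of Gowers' theorem) are adapted to handle the interaction between the Souslin scheme and the $1$-fuzz.

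The main obstacle, and the step most unlike the classical Ellentuck picture, is the base case: standard arguments hinge on an \emph{exact} pigeonhole at each fusion stage, so that the iterative limit inherits exact monochromaticity, whereas here each stage introduces an unavoidable error of $1$. Ensuring that this error does not accumulate across infinitely many stages requires the coordinate-wise structure of the $\ell_\infty$ metric on $\fin_{\pm k}^{[\infty]}$: since each branch extension modifies only finitely many coordinates, and since the $T$-invariance of $\U$ guarantees that the necessary tetris-compressions can be performed inside the ultrafilter structure, the global error remains bounded by $1$. Once the dichotomy is in hand for Souslin measurable sets, the main theorem follows by iterative application to each of the $r$ colour classes $c^{-1}\{i\}$, producing a nested sequence of $\U$-trees; on the resulting tree $\mathcal{T}$, since its branches partition among the $r$ classes, at least one $i$ must satisfy $[\mathcal{T}] \subseteq (c^{-1}\{i\})_1$, yielding the required block sequence $P$ with $\langle P \rangle_{\pm k}^{[\infty]} \subseteq (c^{-1}\{i\})_1$.
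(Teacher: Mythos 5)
Your proposal starts on the right track with the idempotent ultrafilter $\U$ on $\fin_{\pm k}$ and the $\U$-tree / ultra-Ramsey framework, and it correctly identifies the central difficulty (no exact pigeonhole for $\fin_{\pm k}$). However, it then diverges from the correct architecture in a way that leaves a genuine gap.

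The key conceptual misstep is trying to build the approximation error into the Ellentuck dichotomy itself. Todorcevic's ultra-Ellentuck theorem for $\U$-trees (the version the paper invokes) is an \emph{exact} statement: for every $\X$ with the Baire property in the $\U$-topology and every $\U$-tree $U$, there is a $\U$-subtree $U'$ with the same stem such that $[U'] \subseteq \X$ or $[U'] \subseteq \X^c$ --- no fuzz. Crucially, this theorem does \emph{not} require a pigeonhole principle for $\fin_{\pm k}$ at all: the fusion argument underlying it controls branching via the ultrafilter, not via Gowers' theorem. So you should first obtain an exact monochromatic $\U$-tree $[U] \subseteq c^{-1}\{i\}$, and only afterwards pay an approximation cost when you try to extract a block sequence $P$ with $\langle P \rangle_{\pm k}^{[\infty]}$ approximately contained in $[U]$. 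Your version, which runs Gowers' approximate pigeonhole inside the fusion loop and waves at a ``global error bounded by $1$,'' has no mechanism preventing the per-stage error of $1$ from accumulating as $n \to \infty$; the appeal to ``coordinate-wise structure of $\ell_\infty$'' and ``$T$-invariance'' doesn't close that gap, because a branch sums infinitely many contributions and each approximation touches a fresh block of coordinates.

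There are two further missing ingredients which the paper supplies and which you would need. First, the ultrafilter needs to be \emph{subsymmetric} (closed under $A \mapsto -(A)_1$) and idempotent for the sign-flipped tetris $(-T)$, not merely ``$T$-invariant'': this is what lets one pass between a vector and its negation when decoding elements of $\langle P \rangle_{\pm k}$ (which allow $\pm$ signs) into the one-sided $\langle P \rangle_{(-T)}$ format that a $\U$-tree can absorb. Second, and most importantly, your unsupported assertion that ``every $\U$-tree contains, among its branches, a copy of $\langle P \rangle_{\pm k}^{[\infty]}$'' is false as stated. What one can prove is: after replacing the $\U$-tree by an $S$-closed one (incurring error $1$), one can find $P$ with $\langle P \rangle_{\pm k}^{[\infty]} \subseteq ([U])_3$, for a total error of $4$, \emph{not} $1$. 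The paper recovers the claimed error of $1$ by running the entire construction at the level $\fin_{\pm 4k}$ and then compressing through the $4$-to-$1$ Lipschitz homomorphism $\Psi = \Phi_k \circ \Phi_{2k}$, which halves the metric twice. Without this compression step --- which your proposal does not anticipate --- you would at best end up with $\langle P \rangle_{\pm k}^{[\infty]} \subseteq (c^{-1}\{i\})_4$, which is strictly weaker than the statement of the theorem.
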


The rest of this paper is organized as follows. In Section 2 we follow the approach taken in \cite[Chapter 7]{T} and review the theory of $\U$-trees (originally introduced by Blass in \cite{B}) and the $\U$-topology, which refines the metrizable topology and allows for an Ellentuck-type theorem without the need for a pigeonhole principle. In Section 3 we define a subclass of $\U$-trees which are closed under a tetris-like operation and prove a lemma which says that, up to a fixed error, any $\U$-tree can be enlarged so that it becomes closed under such an operation. We then use this lemma in Section 4 to prove Theorem \ref{mainthm} and obtain some standard corollaries.

\subsection*{Acknowledgements} The author would like to thank Professor Stevo Todorcevic for his guidance and for suggesting the problem addressed in this paper. The author is also grateful to Professor Jordi Lopez-Abad for his support.

\section{An ultra-Ramsey space of infinite block sequences in $\fin_{\pm k}$}
In the setting of ultra-Ramsey theory, we work with a special class of trees of countably infinite height which branch according to a given ultrafilter. Recall that an \emph{ultrafilter} on a set $X$ is a collection $\U$ of subsets of $X$ satisfying the following four properties:
\begin{enumerate}
	\item $\varnothing \not \in \U$.
	\item $A, B \in \U$ implies $A \cap B \in \U$.
	\item $A \in \U, B \supseteq A$ implies $B \in \U$.
	\item For every $A \subseteq X$, either $A \in \U$ or $X \setminus A \in \U$.
\end{enumerate}
Let $\beta X$ denote the set of all ultrafilters on $X$; then $\beta X$ is a compact Hausdorff space under the topology generated by basic open sets of the form $$\overline{A} := \{\U \in \beta X : A \in \U\}$$ where $A$ is a non-empty subset of $X$. It is useful to view ultrafilters as quantifiers (e.g. as in Blass \cite{B1}) in the following way. Let $\U$ be an ultrafilter on a set $X$. Given a first-order formula $\varphi(x)$ with a free variable $x$ ranging over elements of $X$, we write $$(\U x) \varphi(x) \iff \{x \in X : \varphi(x)\} \in \U.$$ Using the ultrafilter properties above it is easy to check that ultrafilter quantifiers commute with conjunction and negation of first-order formulas, i.e. we have $$(\U x)\varphi(x) \wedge (\U x)\psi(x) \iff (\U x)(\varphi(x) \wedge \psi(x)),$$ $$\neg (\U x)(\varphi(x)) \iff (\U x)(\neg \varphi(x))$$ for any first-order formulas $\varphi(x)$ and $\psi(x)$.

We will primarily be concerned with ultrafilters on $\fin_{\pm k}$. Given two ultrafilters $\U, \V \in \beta \fin_{\pm k}$, define the \emph{sum} of $\U$ and $\V$ by declaring $$A \in \U + \V \iff (\U p)(\V q)\left(p+q \in A\right)$$ for $A \subseteq \fin_{\pm k}$. To ensure that this operation is always defined we restrict our attention to the set of all \emph{cofinite} ultrafilters on $\fin_{\pm k}$, i.e. ultrafilters $\U \in \beta \fin_{\pm k}$ which satisfy $$X_m := \{p \in \fin_{\pm k} : p(n) = 0 \text{ for all $n < m$}\} \in \U$$ for all $m < \omega$. Let $\gamma \fin_{\pm k}$ denote the set of all cofinite $\U \in \beta \fin_{\pm k}$. Then $(\gamma \fin_{\pm k}, +)$ is a compact semigroup. (We refer the reader to \cite[Chapter 2]{T} for details.) We also extend the tetris operation $T : \fin_{\pm k} \rightarrow \fin_{\pm (k-1)}$ to a map $T : \gamma \fin_{\pm k} \rightarrow \gamma \fin_{\pm (k-1)}$ by setting $$A \in T(\U) \iff T^{-1}(A) \in \U$$ for each $A \subseteq \fin_{\pm (k-1)}$. This extension is a continuous surjective homomorphism.  Below we will consider the sign-flipped version of the tetris operation given by $$-T : \fin_{\pm k} \rightarrow \fin_{\pm (k-1)} : p \mapsto -T(p)$$ together with its extension to $\gamma \fin_{\pm k}$ (the definition of which is analogous to the extension of $T$ to $\gamma \fin_{\pm k}$ above).

Given $A \subseteq \fin_{\pm k}$, let $-A := \{-x : x \in A\}$. We will need the following result, the proof which of uses the general theory of idempotents in compact semigroups.

\begin{lem}\label{lem1} There exists a cofinite ultrafilter $\U$ on $\fin_{\pm k}$ such that $$\U + (-T)^j \U = (-T)^j \U + \U = \U \text{ for all $j \in \{0, \dots, k\}$}.$$ Furthermore, $\U$ is \emph{subsymmetric}: For every $A \in \U$ we have $-(A)_1 \in \U$. \end{lem}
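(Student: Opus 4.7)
The plan is to proceed by induction on $k$, invoking the Ellis--Namakura theorem---every nonempty compact subsemigroup of a compact right-topological semigroup contains an idempotent---within the compact semigroup $(\gamma \fin_{\pm k}, +)$. Each map $(-T)^j : \gamma \fin_{\pm k} \to \gamma \fin_{\pm(k-j)}$ is a continuous surjective homomorphism, so it preserves identities such as idempotence and absorption. For the base case $k = 1$, take any idempotent $\U \in \gamma \fin_{\pm 1}$: since $\fin_{\pm 0}$ reduces to the singleton $\{0\}$, $(-T)\U$ is the principal ultrafilter at the zero function and all nontrivial identities hold automatically.

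For the inductive step, suppose the lemma holds with $k-1$ in place of $k$, with witness $\V \in \gamma \fin_{\pm(k-1)}$. The preimage $(-T)^{-1}(\{\V\}) \subseteq \gamma \fin_{\pm k}$ is a closed subsemigroup, nonempty by surjectivity of $-T$ and a subsemigroup because $\V + \V = \V$ and $(-T)$ is a homomorphism. Restrict further to
$$K := \{\U \in (-T)^{-1}(\{\V\}) : \U + \V = \V + \U = \U\},$$
which is closed and a subsemigroup by associativity. The set $K$ is nonempty: for any $\U_0 \in (-T)^{-1}(\{\V\})$ one checks that $\V + \U_0 + \V \in K$---the absorption identities follow from $\V + \V = \V$, while $(-T)(\V + \U_0 + \V) = (-T)\V + \V + (-T)\V = \V$ uses the inductive identity $\V + (-T)\V = (-T)\V + \V = \V$. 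An application of Ellis--Namakura inside $K$ yields an idempotent $\U \in K$, and iterated use of the inductive identities $\V + (-T)^i \V = (-T)^i \V + \V = \V$ (valid for $i \in \{0, \dots, k-1\}$) shows that $\U + (-T)^j \U = (-T)^j \U + \U = \U$ for all $j \in \{0, 1, \dots, k\}$.

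For subsymmetry, combine the identities $\U + (-T)\U = \U$ and $(-T)\U + \U = \U$. For $A \in \U$ these unfold as $(\U p)(\U q)((p - T(q) \in A) \wedge (-T(p) + q \in A))$. Writing $a_1 := p - T(q)$ and $a_2 := -T(p) + q$, both lie in $A$ and
$$a_1 + a_2 = (p - T(p)) + (q - T(q)) = \mathrm{sgn}(p) + \mathrm{sgn}(q),$$
a vector of $\ell_\infty$-norm at most $1$ because the supports of $\mathrm{sgn}(p)$ and $\mathrm{sgn}(q)$ are disjoint (as $p < q$). Therefore $\|a_2 - (-a_1)\|_\infty \leq 1$, giving $a_2 \in (-A)_1$; together with $a_2 \in A$ this yields $(\U p)(\U q)(-T(p) + q \in A \cap (-A)_1)$. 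Invoking $(-T)\U + \U = \U$ once more delivers $A \cap (-A)_1 \in \U$, so $-(A)_1 = (-A)_1 \in \U$, as required.

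The main obstacle is the nonemptiness of $K$ in the inductive step: the key observation $\V + \U_0 + \V \in K$ uses the inductive hypothesis in an essential way, drawing on both the absorption $\V + (-T)\V = (-T)\V + \V = \V$ and the idempotence $\V + \V = \V$. All remaining steps reduce to associativity of the mixed-space ultrafilter sum, the Ellis--Namakura theorem, and the single combinatorial fact that $\mathrm{sgn}(p) + \mathrm{sgn}(q)$ has $\ell_\infty$-norm at most $1$ whenever $p < q$.
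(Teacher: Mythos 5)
Your subsymmetry argument is correct and is essentially the standard one: from $\U + (-T)\U = \U$ and $(-T)\U + \U = \U$, the observation that $(p - T(q)) + (-T(p) + q) = (p - T(p)) + (q - T(q))$ has $\ell_\infty$-norm at most $1$ (the two summands have disjoint supports since $p < q$) gives $(\U p)(\U q)\bigl(-T(p) + q \in A \cap (-A)_1\bigr)$, hence $A \cap (-A)_1 \in (-T)\U + \U = \U$ and so $-(A)_1 = (-A)_1 \in \U$.

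The existence half, however, has a genuine gap. In the inductive step you set $K := \{\U \in (-T)^{-1}(\{\V\}) : \U + \V = \V + \U = \U\}$ and assert it is closed so that Ellis--Namakura applies. With this paper's convention $A \in \U + \V \iff (\U p)(\V q)(p + q \in A)$, the semigroup $(\gamma\fin_{\pm k}, +)$ is only a \emph{right}-topological semigroup: $\U \mapsto \U + \V$ is continuous for each fixed $\V$ (so $\{\U : \U + \V = \U\}$ is closed), but $\U \mapsto \V + \U$ is continuous only when $\V$ is principal. Your $\V$ is a nonprincipal idempotent, so the condition $\V + \U = \U$ carves out the right ideal $\V + \gamma\fin_{\pm k}$, which is the image of a compact set under a map that is not continuous and hence not visibly closed; Ellis--Namakura cannot be invoked on $K$ as written. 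The standard repair, and the route the cited sources take, is to drop the left-absorption constraint: apply Ellis--Namakura to the genuinely closed nonempty subsemigroup $K' := \{\U \in (-T)^{-1}(\{\V\}) : \U + \V = \U\}$ (nonempty since $\U_0 + \V \in K'$ for any $\U_0$ with $(-T)\U_0 = \V$, using $\V + (-T)\V = \V$) to get an idempotent $\mathcal{W}$, and then put $\U := \V + \mathcal{W}$. A direct computation using $\mathcal{W} + \V = \mathcal{W}$ and the idempotence of $\V$ and $\mathcal{W}$ shows that $\U$ is idempotent, $(-T)\U = \V$, and $\U + \V = \V + \U = \U$; the rest of your induction, including the promotion of these identities to all $j \le k$ via $\V + (-T)^i\V = (-T)^i\V + \V = \V$, then goes through unchanged.
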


The proof of the first part of the above result can be found in \cite[Chapter III.5]{AT} or \cite[Lemma 4]{K}. The second part follows from the first (see \cite[Lemma 11]{K}) but we point out here that the theory of subsymmetric ultrafilters was first developed in \cite[Chapter 2]{T} (and in the earlier manuscript \cite{T1}) and is used there to give an ultrafilter proof of Gowers' theorem. Note that the ultrafilter $\U$ given by Lemma \ref{lem1} has the property that, for any $A \in \U$ and $j < k$, $$(\U f)(\U g) \left(\{f, g, f+ (-T)^j(g), (-T)^j(f) + g\} \subseteq A\right).$$ Since ultrafilter quantifiers commute with finite conjunctions it follows that $$(\U f)(\U g) \left(\{f, g, f+ (-T)^j(g), (-T)^j(f) + g: j < k\} \subseteq A\right)$$ for any $A \in \U$.

We now proceed to describe a class of trees which form the basis for the required ultra-Ramsey theory. To this end, for each $k \in \N$ we view the space $\fin_{\pm k}^{[< \infty]}$ as a tree ordered by end-extension $\sqsubseteq$ and with root $\varnothing$, the empty sequence. Unless otherwise specified, for the rest of this paper we fix $k \in \N$ together with the ultrafilter $\U$ on $\fin_{\pm k}$ given by Lemma \ref{lem1}. The next two definitions are adapted from \cite[Chapter 7.2]{T}.

\begin{defi} A \emph{$\U$-tree} is a downward closed subtree $U \subseteq \fin_{\pm k}^{[<\infty]}$ such that $$U_t := \{p \in \fin_{\pm k} : (t, p) \in U\} \in \U$$ for all $t \in U$. The \emph{stem} of $U$, denoted $\stem(U)$, is the $\sqsubseteq$-maximal element of $U$ which is comparable to every other node of the tree.\end{defi}

Given a $\U$-tree $U$, the set of infinite branches of $U$ is denoted by $$[U] := \{(p_n)_{n < \omega} \in \fin_{\pm k}^{[\infty]} : (p_0, \dots, p_m) \in U \text{ for all $m < \omega$}\}.$$ For $t \in U$ let $|t|$ denote the \emph{length} of $t$, which is just the domain of $t$ when viewed as a finite sequence in $\fin_{\pm k}^{[<\infty]}$. For $m < \omega$, the {$m^{\rm{th}}$ level} $U(m)$ of $U$ is the set of all $t \in U$ of length $m$.

In order to prove an infinite-dimensional version of Theorem \ref{gowersthm} we work with a topology defined using $\U$-trees and which extends the usual metrizable topology on $\fin_{\pm k}^{[\infty]}$. Working in this topology allows us to remedy the fact that the space $\fin_{\pm k}$ lacks an exact pigeonhole principle.

\begin{defi} Let $\X \subseteq \fin_{\pm k}^{[\infty]}$. $\X$ is \emph{$\U$-open} if for every $A \in \X$ there is a $\U$-tree $U$ such that $A \in [U] \subseteq \X$. $\X$ is \emph{$\U$-Ramsey} if for every $\U$-tree $U$ there is a $\U$-subtree $U' \subseteq U$ with $\stem(U) = \stem(U')$ such that $[U'] \subseteq \X$ or $[U'] \subseteq \X^c$. If the second alternative always holds then we say $\X$ is \emph{$\U$-Ramsey null}. \end{defi}

The collection of all $\U$-open subsets of $\fin_{\pm k}^{[\infty]}$ forms a topology, called the \emph{$\U$-topology}, which refines the metrizable topology of $\fin_{\pm k}^{[\infty]}$. The next two results are adapted from \cite[Chapter 7.2]{T} by replacing the tree $\N^{[<\infty]}$ of finite subsets of $\N$ ordered by end-extension with the tree $\fin_{\pm k}^{[<\infty]}$. We state them in our context without proof. First, recall that a subset $A$ of a topological space $X$ has the \emph{property of Baire} if there is an open set $U \subseteq X$ such that the symmetric difference of $A$ and $U$ is meager in $X$. We then have the following version of Todorcevic's ultra-Ellentuck theorem, which builds on a theorem of Ellentuck \cite{E} relating the notions of Baire and Ramsey in the setting of $\N^{[\infty]}$, the set of all infinite subsets of $\N$.

\begin{thm} Let $\X \subseteq \fin_{\pm k}^{[\infty]}$. Then $\X$ has the property of Baire relative to the $\U$-topology if and only if $\X$ is $\U$-Ramsey. Furthermore, $\X$ is meager with respect to the $\U$-topology if and only if $\X$ is $\U$-Ramsey null. \end{thm}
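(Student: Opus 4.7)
The plan is to mimic Todorcevic's proof of the ultra-Ellentuck theorem from \cite[Chapter 7.2]{T}, replacing $\N^{[<\infty]}$ by $\fin_{\pm k}^{[<\infty]}$ throughout. The statement bundles two equivalences: $\U$-Ramsey versus Baire property, and $\U$-Ramsey null versus meager, both in the $\U$-topology. Since the Baire-property sets in any topology are exactly those differing from an open set by a meager set, both equivalences reduce to two combinatorial claims about $\U$-trees: that the $\U$-Ramsey sets form a $\sigma$-field containing the $\U$-open sets, and that the $\U$-Ramsey null sets coincide with the $\U$-meager sets. The rest of the proof is soft.

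The first and central claim is an \emph{abstract Ellentuck dichotomy}: for every $\U$-tree $U$ and every $\U$-open $\X \subseteq \fin_{\pm k}^{[\infty]}$, there is a $\U$-subtree $U' \subseteq U$ with $\stem(U') = \stem(U)$ such that $[U'] \subseteq \X$ or $[U'] \cap \X = \varnothing$. Call a node $t \in U$ \emph{good} if some $\U$-subtree of $U$ with stem containing $t$ has its branch set inside $\X$, and \emph{bad} otherwise. For each $t \in U$, the ultrafilter property applied to $U_t$ yields a dichotomy: either $\{p \in U_t : (t,p) \text{ is good}\} \in \U$ or $\{p \in U_t : (t,p) \text{ is bad}\} \in \U$. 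If $\stem(U)$ is good we are done by definition. Otherwise, recursively prune $U$ keeping at each node only its bad successors (a $\U$-large set by the dichotomy): the resulting $\U$-subtree $U'$ consists entirely of bad nodes. Then $[U'] \cap \X = \varnothing$, since any $A \in [U'] \cap \X$ would, by $\U$-openness, lie in $[V]$ for some $\U$-tree $V$; the intersection $V \cap U'$ (restricted above a sufficiently long initial segment $s$ of $A$) would be a $\U$-subtree of $U$ with stem $s$ and branches in $\X$, contradicting badness of $s$.

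The second claim is closure of the $\U$-Ramsey class under countable unions (complements being immediate from the symmetric definition). Given $\X = \bigcup_n \X_n$ with each $\X_n$ being $\U$-Ramsey and an initial $\U$-tree $U$, I would build a descending sequence $U \supseteq U_0 \supseteq U_1 \supseteq \cdots$ of $\U$-trees sharing the stem of $U$, where at stage $n$ the $\U$-Ramsey property of $\X_n$ is invoked above every node of $U_{n-1}$ of length $n$ to make each such subtree $\X_n$-homogeneous. The diagonal limit $U_\infty$, which agrees with $U_n$ on nodes of length $\leq n$, is then either $\X_n$-positive at some stage (so $[U_\infty] \subseteq \X$) or $\X_n^c$-positive at every stage (so $[U_\infty] \subseteq \X^c$).

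The main obstacle is verifying that $U_\infty$ is genuinely a $\U$-tree, i.e.\ that $(U_\infty)_t \in \U$ for every node $t$. The standard safeguard is to arrange each refinement $U_n \to U_{n+1}$ so that branching is altered only above nodes of length $n$; then for a fixed $t$ of length $m$, the slice $(U_\infty)_t$ stabilises after stage $m$ and equals a single $\U$-large set, rather than a countable intersection of them. Once this fusion is in place, both conclusions of the theorem follow formally: on one hand, the $\sigma$-field of $\U$-Ramsey sets contains the $\U$-open sets (by the dichotomy) and hence contains every Baire set; on the other hand, a $\U$-Ramsey set $\X$ differs from the $\U$-open set witnessing its homogeneity by a $\U$-Ramsey null set, so $\X$ has the Baire property. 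The meager versus $\U$-Ramsey null equivalence runs in parallel, using the observation that the $\U$-Ramsey null sets form a $\sigma$-ideal whose elements are exactly the nowhere dense sets in the $\U$-topology.
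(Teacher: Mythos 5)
Note first that the paper states this theorem without proof, merely indicating that it is obtained from \cite[Chapter 7.2]{T} by replacing $\N^{[<\infty]}$ with $\fin_{\pm k}^{[<\infty]}$, so there is no paper argument to compare against line by line; your sketch is a reasonable reconstruction of that adaptation. The good/bad-node dichotomy for $\U$-open sets is essentially correct, as is the key observation that a fusion sequence which alters branching only above level $n$ at stage $n$ stabilizes to a genuine $\U$-tree.

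The gap is in your treatment of countable unions. When you invoke the $\U$-Ramsey property of $\X_n$ separately above each length-$n$ node of $U_{n-1}$, the homogeneity decisions can point in different directions at different nodes, so it is not true that the fusion $U_\infty$ is ``$\X_n$-positive at some stage or $\X_n^c$-positive at every stage.'' What the construction actually yields is that $\X_n \cap [U_\infty]$ is determined by level $n$, hence relatively clopen in $[U_\infty]$, so $\X \cap [U_\infty]$ is only \emph{relatively open}, and one more application of the open-set dichotomy to $U_\infty$ is needed to finish. The cleaner route, and the one Todorcevic's text follows, is to run the fusion for the $\U$-Ramsey \emph{null} sets first (where homogeneity is one-sided and the ambiguity disappears), establishing that they form a $\sigma$-ideal; closure of the $\U$-Ramsey sets under countable unions then follows by observing that $\X \setminus O$ is $\U$-Ramsey null, where $O$ is the $\U$-open union of all $[U]$ with $[U] \subseteq \X$, and then applying the dichotomy to $O$. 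Relatedly, your final sentence should say that the $\U$-Ramsey null sets coincide with the $\U$-\emph{meager} sets rather than the nowhere dense ones: nowhere dense sets are not closed under countable unions and so cannot be the elements of a $\sigma$-ideal, whereas once fusion is in place the $\U$-Ramsey null sets are.
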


The next result uses a classical fact of Nikodym (see, e.g., \cite[Chapter 4.1]{T}) which says that, in any topological space, the property of Baire is preserved under the Souslin operation.

\begin{cor}\label{cor1} For every $r \in \N$ and every Souslin measurable $c : \fin_{\pm k}^{[\infty]} \rightarrow r$ there are $i < r$ and a $\U$-tree $U$ with stem $\varnothing$ such that $[U] \subseteq c^{-1}\{i\}$. \end{cor}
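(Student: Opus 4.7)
The plan is to deduce this corollary from the ultra-Ellentuck theorem stated just above, via a standard Nikodym-type argument, and then to iterate the resulting $\U$-Ramsey property finitely many times over the values of $c$.

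First, I would show that each fibre $c^{-1}\{i\}$ has the property of Baire with respect to the $\U$-topology. Since the $\U$-topology refines the metrizable topology on $\fin_{\pm k}^{[\infty]}$, every metrizable-open set is automatically $\U$-open and therefore has the property of Baire in the $\U$-topology. The family of subsets of $\fin_{\pm k}^{[\infty]}$ with the property of Baire in the $\U$-topology forms a $\sigma$-field and, by the Nikodym result quoted in the excerpt, is closed under the Souslin operation. It therefore contains the smallest field generated by the metrizable-open sets and closed under the Souslin operation, which is precisely the class of metrizable Souslin measurable sets. In particular, each $c^{-1}\{i\}$ lies in this class and hence has the property of Baire in the $\U$-topology. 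The ultra-Ellentuck theorem then yields that each $c^{-1}\{i\}$ is $\U$-Ramsey.

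Next, I would iterate. Start with $U_0 := \fin_{\pm k}^{[<\infty]}$, which is a $\U$-tree with stem $\varnothing$ because $\U$ is cofinite (so the successor set at every node lies in $\U$). Having constructed a $\U$-tree $U_j$ with $\stem(U_j) = \varnothing$, apply the $\U$-Ramsey property of $c^{-1}\{j\}$ to $U_j$ to obtain a $\U$-subtree $U_{j+1} \subseteq U_j$ with $\stem(U_{j+1}) = \varnothing$ satisfying either $[U_{j+1}] \subseteq c^{-1}\{j\}$ or $[U_{j+1}] \cap c^{-1}\{j\} = \varnothing$. The first time the former alternative occurs I set $U := U_{j+1}$ and $i := j$ and stop. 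If the former never occurs for $j < r$, then $[U_r] \cap c^{-1}\{j\} = \varnothing$ for every $j < r$, forcing $[U_r] = \varnothing$; but every $\U$-tree has nonempty set of branches, since the condition $U_t \in \U$ guarantees that every node has a successor and hence lies on an infinite branch, a contradiction.

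The substantive content is entirely carried by the ultra-Ellentuck theorem and by Nikodym's theorem, both cited as known. The only points in the above that need real care are the inclusion of the metrizable Souslin measurable sets inside the $\U$-Baire $\sigma$-field (immediate from refinement of topologies combined with Nikodym) and the preservation of the empty stem throughout the iteration (built into the statement of $\U$-Ramseyness). No further obstacle is expected.
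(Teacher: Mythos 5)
Your proposal is correct and follows exactly the route the paper intends: the paper states this corollary without proof, attributing it to the machinery of Todorcevic's ultra-Ellentuck theorem together with Nikodym's theorem on preservation of the property of Baire under the Souslin operation, which is precisely the chain Souslin measurable $\Rightarrow$ Baire property in the $\U$-topology $\Rightarrow$ $\U$-Ramsey $\Rightarrow$ finite iteration over the colours that you carry out. Your two points of care — that the metrizable Souslin measurable sets sit inside the $\U$-Baire field because the $\U$-topology refines the metrizable one, and that $[U_r]\neq\varnothing$ because every node of a $\U$-tree has a successor — are exactly the details the cited references supply, so nothing is missing.
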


\section{$S$-closed $\U$-trees}
In this brief section we define a class of subtrees which will allow us to inductively construct certain block sequences during the proof of Theorem \ref{mainthm}. First, notice that if $p, q \in \fin_{\pm k}$ satisfy $|| p - q || \leq 1$, then $$n \in (\supp p \setminus \supp q) \cup (\supp q \setminus \supp p) \implies |p(n)|, |q(n)| \leq 1.$$ This motivates the following weak version of the tetris operation: Given $p \in \fin_{\pm k}$ define $S(p) \in \fin_{\pm k}$ by
\[ S(p)(n) := \begin{cases}
	p(n) & \text{ if $|p(n)| \neq 1$}\\
	0 & \text{ if $|p(n)| = 1$}.
\end{cases}
\]
We will repeatedly use the fact that $||p - S(p)|| \leq 1$ for all $p \in \fin_{\pm k}$. In particular, notice that $||p-q|| \leq 1$ implies $\supp S(p) \subseteq \supp q$ and $||S(p)-q|| \leq 2$. This will allow us to control the supports of elements which are close to a fixed $q \in \fin_{\pm k}$. Also note that $S$ is \emph{idempotent}, i.e. $S \circ S = S$. The following lemma allows us to replace a given $\U$-tree with one which behaves well with respect to $S$, at the cost of adding an approximate constant.

\begin{lem}\label{lem2} Suppose $V$ is a $\U$-tree with $\stem(V) = \varnothing$. Then there is a $\U$-tree $U$ with $\stem(U) = \varnothing$ such that $[U] \subseteq ([V])_1$ and such that $U$ is \emph{$S$-closed}: For every $t \in U$ and every $p \in \fin_{\pm k}$, we have $$(t, p) \in U \rightarrow (t,S(p)) \in U.$$
\end{lem}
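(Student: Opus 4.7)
The plan is to construct $U$ by recursion on the height while simultaneously building a length-preserving ``shadow'' map $\varphi \colon U \to V$, maintaining the invariant that for each $t \in U$ the block sequence $s := \varphi(t) \in V$ satisfies $\|t(i) - s(i)\| \leq 1$ for every $i < |t|$. Extending $\varphi$ coherently to branches will then give $[U] \subseteq ([V])_1$ automatically. Begin with $\varnothing \in U$ and $\varphi(\varnothing) := \varnothing$; since $U_\varnothing$ will end up in $\U$, the stem of $U$ is $\varnothing$.

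For the inductive step, suppose $t \in U$ of length $m$ has been placed with $s := \varphi(t) \in V$. Choose $N_t \in \omega$ strictly greater than both $\max \supp t(m-1)$ and $\max \supp s(m-1)$ (set $N_t := 0$ when $m=0$); since $\U$ is cofinite, $X_{N_t} \in \U$, so
\[
A_t := V_s \cap X_{N_t} \in \U.
\]
Define
\[
U_t := A_t \cup S[A_t], \qquad S[A_t] := \{S(q) : q \in A_t\},
\]
so $U_t \in \U$ as a superset of $A_t$. For each $p \in U_t$, set $\varphi((t,p)) := (s,p)$ when $p \in A_t$, and otherwise fix some $q \in A_t$ with $S(q) = p$ and set $\varphi((t,p)) := (s,q)$.

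Verification is routine. Each $(t,p)$ is a valid block sequence because $\min \supp p \geq N_t > \max \supp t(m-1)$, using $\supp S(q) \subseteq \supp q$ in the second case; similarly $\varphi((t,p)) \in V$ because $p$ (resp.\ $q$) belongs to $V_s$, and its last entry differs from $p$ by at most $\|S(q)-q\| \leq 1$, so the invariant on $\varphi$ is preserved. $S$-closure follows from idempotence of $S$: if $p \in A_t$ then $S(p) \in S[A_t] \subseteq U_t$, while if $p = S(q) \in S[A_t]$ then $S(p) = S(S(q)) = S(q) = p \in U_t$. Finally, given any branch $P = (p_n)_{n<\omega} \in [U]$, the map $\varphi$ assembles into a branch $Q = (q_n)_{n<\omega} \in [V]$ with $\|p_n - q_n\| \leq 1$ for all $n$, hence $\|P - Q\| \leq 1$.

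The only real subtlety is the block-ordering condition at each level: the last entries of $t$ and $s$ may have different supports, so enlarging $V_s$ by throwing in $S[A_t]$ could in principle destroy compatibility with $t$. This is what forces the trimming to $X_{N_t}$ for a sufficiently large $N_t$. Once that is handled, the decisive observation is simply that $S$ is idempotent, so adjoining $S[A_t]$ to $A_t$ yields a set closed under $S$ for free.
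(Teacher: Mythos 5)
Your proposal is correct and follows essentially the same construction as the paper: build $U$ level by level, at each node replacing $V_s$ by $V_s \cup S[V_s]$, while maintaining a length-preserving projection into $V$ whose entries differ by at most $1$ in norm from the corresponding entries of the node being built; $S$-closure then falls out of the idempotence of $S$, and the branch map gives $[U] \subseteq ([V])_1$.

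One small remark: the trimming to $X_{N_t}$ is actually superfluous. By your own construction, for a node $(t,p)$ with shadow $(s,q')$, the last entry $p$ is either $q'$ itself or $S(q')$, so in all cases $\supp t(i) \subseteq \supp \varphi(t)(i)$ for every $i$. In particular $\max\supp t(m-1) \leq \max\supp s(m-1)$, and since every $p \in V_s \cup S[V_s]$ has $\min\supp p > \max\supp s(m-1)$, the block-ordering condition $(t,p) \in \fin_{\pm k}^{[m+1]}$ holds automatically. The paper handles this implicitly by simply writing the compatibility condition into the definition of $U(m+1)$; no appeal to cofiniteness of $\U$ is needed at this step. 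Your extra trim is harmless (it keeps $U_t \in \U$), but the argument is cleaner once one notices the nested-support invariant.
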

\begin{proof} Fix a well-ordering $<$ of $\fin_{\pm k}^{[<\infty]}$. We construct, by induction on $n \geq 1$, each level $U(n)$ of $U$ above $\varnothing$ together with projections $\pi_n : U(n) \rightarrow V(n)$ satisfying $||t - \pi_n(t)||\leq 1$ for all $t \in U(n)$. To begin, take $U_\varnothing := V_\varnothing \cup S(V_\varnothing)$ and hence $$U(1) := \{(p) \in \fin_{\pm k}^{[1]}: p \in U_\varnothing\}.$$ The projection $\pi_1 : U(1) \rightarrow V(1)$ is defined by setting, for $t = (p) \in U(1)$,
\[ \pi_1(t) := \begin{cases}
	(p) & \text{ if $p \in V_\varnothing$}\\
	\left( \min\left(V_\varnothing \cap S^{-1}(p)\right)\right) & \text{ otherwise}
\end{cases}
\]
where the minimum is taken with respect to $<$. Note that such a minimum exists, since if $p \in U_\varnothing \setminus V_\varnothing$ then we must have $p \in S(V_\varnothing)$ and so there is $q \in V_\varnothing$ such that $S(q) = p$. Furthermore, since $||q - S(q)|| \leq 1$ for all $q \in \fin_{\pm k}$, we have $||t - \pi_1(t)|| \leq 1$ for all $t \in U(1)$.

Now suppose we have constructed the first $m > 1$ levels $U(1), \dots, U(m)$ of $U$ above $\varnothing$ with their corresponding projections $\pi_1, \dots, \pi_m$. For each $t \in U(m)$, set $U_t := V_{\pi_m(t)} \cup S(V_{\pi_m(t)})$. We then define $$U(m+1) := \{(s,p) \in \fin_{\pm k}^{[m+1]} : s \in U(m), \, p \in U_s\}.$$ The projection $\pi_{m+1} : U(m+1) \rightarrow V(m+1)$ is defined by setting, for $t = (s, p) \in U(m+1)$ with $s \in U(m)$ and $p \in U_s$,
\[ \pi_{m+1}(t) := \begin{cases}
	(\pi_m(s), p) & \text{ if $p \in V_{\pi_m(s)}$}\\
	\left(\pi_m(s), \min\left(V_{\pi_m(s)} \cap S^{-1}(p)\right)\right) & \text{ otherwise}
\end{cases}
\]
where the minimum is taken with respect to $<$. Inductively we have $||s - \pi_m(s)|| \leq 1$ and so by definition of $S$ we have $||t - \pi_{m+1}(t)|| \leq 1$. This completes the inductive construction of $U$.

The fact that $U$ is $S$-closed follows easily from the above construction. To finish, we check that $[U] \subseteq ([V])_1$. Let $P = (p_n)_{n < \omega}$ be an infinite block sequence corresponding to a branch of $U$. We define a projection $\pi_\infty : [U] \rightarrow [V]$ by setting $$\pi_\infty(P) := (\pi_n \circ r_n(P))_{n \in \N}$$ where $r_n : [U] \rightarrow U(n)$ is the $n^{\rm{th}}$ restriction mapping given by $$r_n(P) := (p_0, \dots, p_{n-1}).$$ Note that $\pi_\infty(P)$ is indeed a branch in $V$ since $s \sqsubseteq t$ implies $\pi_{|s|}(s) \sqsubseteq \pi_{|t|}(t)$ for any $s, t \in U$. Since for every $P \in [U]$ we have $||P - \pi_\infty(P)|| \leq 1$ and $\pi_\infty(P) \in [V]$, we obtain that $[U] \subseteq ([V])_1$. \end{proof}

\section{The proof of Theorem \ref{mainthm}}
In this section we give a proof of the main theorem of this note. To do so, we first need to consider the following modification of the usual notion of block subsequence. Given a block sequence $P = (p_n)_{n < \omega} \in \fin_{\pm k}^{[\infty]}$, let $\langle P \rangle_{(-T)}$ be the partial subsemigroup consisting of all vectors of the form $$(-T)^{j_0}(p_{n_0}) + \dots + (-T)^{j_m}(p_{n_m})$$ where $m < \omega, n_0 < \dots < n_m < \omega$ and $j_0, \dots, j_m < k$ are such that $\min j_i = 0$. If $Q = (q_n)_{n < \omega}$ is another block sequence, write $Q \leq_{(-T)} P$ to denote that $q_n \in \langle P \rangle_{(-T)}$ for every $n < \omega$. We define $\langle P \rangle_{(-T)}$ for finite block sequences $P = (p_n)_{n<m}$ similarly; in this case we write $\langle p_0, \dots, p_{m-1} \rangle_{(-T)}$ for the corresponding (finite) partial subsemigroup. 

\begin{lem}\label{lem3} Let $U$ be a $\U$-tree with stem $\varnothing$. There is $P = (p_n)_{n < \omega} \in \fin_{\pm k}^{[\infty]}$ such that $Q \leq_{(-T)} P$ implies $Q \in [U]$. \end{lem}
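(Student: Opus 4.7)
The plan is to construct $P = (p_n)_{n < \omega}$ recursively, maintaining at stage $n$ the invariant that every finite block sequence with entries in $\langle p_0, \ldots, p_{n-1}\rangle_{(-T)}$ is a node of $U$. If this invariant can be maintained indefinitely, then any $Q \leq_{(-T)} P$ has each of its finite initial segments in $U$, hence $Q \in [U]$. The base case $n = 0$ is trivial since $\stem(U) = \varnothing$.

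For the inductive step, given $p_0 < \cdots < p_{n-1}$ satisfying the invariant, the new nodes I must place in $U$ have the form $(q_0, \ldots, q_{\ell-1}, q_\ell)$ with $t := (q_0, \ldots, q_{\ell-1})$ already a node of $U$ and $q_\ell$ a $(-T)$-combination actively involving $p_n$. Writing $q_\ell = y + (-T)^j(p_n)$ for a (possibly empty) sub-sum $y$ of earlier blocks, the $\U$-tree property reduces the requirement $(q_0, \ldots, q_\ell) \in U$ to $q_\ell \in U_t$. Applying Lemma \ref{lem1} in the form displayed just after its statement, namely $(\U f)(\U g)[\{f,\, g,\, f + (-T)^{j'}g,\, (-T)^{j'}f + g : j' < k\} \subseteq A]$ for each $A \in \U$, and taking a finite intersection over the finitely many targets $A = U_t$ arising from stage $n$ and over $j' < k$, I obtain a set $A^\ast_n \in \U$ such that every $p_n$ drawn from $A^\ast_n \cap X_{m_n}$ (for suitable $m_n$ exceeding $\max \supp p_{n-1}$) fulfils $y + (-T)^j(p_n) \in U_t$ provided $y$ already sits in the appropriate ultrafilter neighbourhood.

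The strengthened invariant I would maintain is therefore a fusion-style statement: a decreasing chain $A_0 \supseteq A_1 \supseteq \cdots$ of sets in $\U$ with $p_n \in A_n$ and each $A_n$ refined in advance to contain $\langle p_0, \ldots, p_{n-1}\rangle_{(-T)}$ together with the ultrafilter neighbourhoods required by all later stages. The passage from $A_n$ to $A_{n+1}$ simultaneously invokes both idempotent identities $\U + (-T)^{j'}\U = \U$ and $(-T)^{j'}\U + \U = \U$ from Lemma \ref{lem1}, applied to every current target $U_t$ and every $j' \in \{0, \ldots, k\}$, so that future pairs $(y, p_{n+1})$ are automatically handled.

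The main obstacle will be the case $q_\ell = y + p_n$ when $\min_i j_i \geq 1$ for all the exponents inside $y$: then $y \notin \fin_{\pm k}$ and the invariant does not directly cover $y$. To handle this, I would write $y = (-T)^\mu(z)$ where $\mu := \min_i j_i \geq 1$ and $z \in \langle p_0, \ldots, p_{n-1}\rangle_{(-T)}$ is obtained by subtracting $\mu$ from each exponent of $y$. The identity $(-T)^\mu \U + \U = \U$ then converts the requirement $y + p_n \in U_t$ into a condition on $(z, p_n)$, which is already encoded in the fusion sets provided $A_n$ was defined so as to close off the left-multiplication clauses of Lemma \ref{lem1} at every tetris level $\mu \in \{1, \ldots, k\}$. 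Arranging the bookkeeping so that all levels $\mu$ are handled uniformly, rather than only $\mu = 0$, is the delicate part of the argument.
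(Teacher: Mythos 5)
Your overall strategy matches the paper's: build $P$ by fusion along a decreasing chain $A_0 \supseteq A_1 \supseteq \cdots$ of sets in $\U$, use the idempotence identities $\U + (-T)^{j}\U = (-T)^{j}\U + \U = \U$ from Lemma~\ref{lem1} to absorb $(-T)$-combinations into the $A_n$, and repair the problematic case where the sub-sum of earlier blocks has all tetris exponents $\geq 1$ (so that it escapes $\fin_{\pm k}$) by factoring out $(-T)^\mu$ and invoking $(-T)^\mu\U + \U = \U$. That last observation is the real content, and it is the same mechanism the paper uses.

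Where your sketch is not yet a proof is in the phrasing of the fusion invariant. Asking that ``each $A_n$ contain $\langle p_0, \ldots, p_{n-1}\rangle_{(-T)}$ together with the ultrafilter neighbourhoods required by all later stages'' is not a checkable inductive condition: membership of a fixed $y$ or $z$ in $A_n$ is not by itself enough to conclude $y + (-T)^j(p_n) \in U_t$, because the ultrafilter identities are two-variable statements of the form $(\U p)(\U q)(\cdot)$, and you need to have arranged at an earlier stage that the particular $z$ you will later encounter lands in the $q$-slot. The paper resolves this by maintaining the \emph{one-step} absorption property
\[
A_{n+1} \subseteq \{q \in \fin_{\pm k} : \langle p_n, q\rangle_{(-T)} \subseteq A_n\},
\]
together with $A_n \subseteq U_t$ for each of the finitely many $t \in U$ with $\supp\bigcup t \subseteq \bigcup_{i<n}\supp p_i$, and only \emph{afterwards} proves the multi-step claim $\langle p_m, \ldots, p_n\rangle_{(-T)} \subseteq A_m$ by a downward induction on $m$. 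It is inside that downward induction that the factoring appears: one writes $q = p_{m-1} + (-T)^l\bigl(\sum_{i=m}^n (-T)^{j_i - l}(p_i)\bigr)$ with $l = \min\{j_m,\ldots,j_n\}$, peeling off the \emph{earliest} block, so the rescaled tail lies in $A_m$ by the inductive hypothesis and the one-step absorption finishes. Your version peels off the \emph{newest} block $p_n$; that direction can be made to work, but then your inductive hypothesis must be reformulated so that it tracks which $A_m$ each partial combination lands in (not merely that all nodes built from $p_0,\ldots,p_{n-1}$ lie in $U$), and at that point you have essentially reconstructed the paper's property (4). So: the ideas are right, but replace the containment clause with the one-step absorption property and add the downward-induction verification, and the argument closes.
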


\begin{proof} By induction on $n < \omega$ we define two sequences $A_0 \supseteq A_1 \supseteq \dots$ and $p_0 < p_1 < \dots$ such that, for all $n < \omega$,
	\begin{enumerate}
		\item $p_n \in A_n \in \U$,
		\item $A_{n+1} \subseteq \{q \in \fin_{\pm k} : \langle p_n, q \rangle_{(-T)} \subseteq A_n\}$, and
		\item $A_n \subseteq U_t \cap -(U_t)_1$ for every $t \in U$ such that $$\supp \bigcup t \subseteq \bigcup_{i < n} \supp p_i$$
	\end{enumerate}
where, for a node $t = (t_0, \dots, t_{m-1}) \in U$, $\bigcup t$ is the element $\sum_{i < m} t_i \in \fin_{\pm k}$. To start, take $A_0 := U_\varnothing \cap -(U_\varnothing)_1$ and note that $A_0 \in \U$ since $\U$ is subsymmetric and $U_\varnothing \in \U$. By definition of $\U$ we have $$(\U p)(\U q) \left(\langle p, q \rangle_{(-T)} \subseteq A_0\right)$$ and so we take any $p_0 \in \fin_{\pm k}$ such that $(\U q)\left( \langle p_0, q \rangle_{(-T)} \subseteq A_0\right)$; in particular $p_0 \in A_0$ by definition of $\langle p_0, q \rangle_{(-T)}$. We then take $A_1$ to be the intersection of the set $\{q \in \fin_{\pm k} : \langle p_0, q \rangle_{(-T)} \subseteq A_0\}$ with $$\bigcap\left\{ U_t \cap -(U_t)_1 : t\in U \text{ and} \, \supp \bigcup t \subseteq \supp p_0\right\}.$$ Note that $A_0 \supseteq A_1$ and $A_1 \in \U$ since there are only finitely many $t \in U$ satisfying $\supp \bigcup t \subseteq \supp p_0$, and since each $U_t \cap -(U_t)_1 \in \U$ using the fact that $\U$ is subsymmetric.

Now suppose $A_0, \dots, A_n$ and $p_0, \dots, p_{n-1}$ have been constructed. Since $\U$ is cofinite, pick any $p_n \in \fin_{\pm k}$ such that $p_n > p_{n-1}$ and $(\U q)\left( \langle p_n, q \rangle_{(-T)} \subseteq A_n\right)$; in particular $p_n \in A_n$. Then take $A_{n+1}$ to be the intersection of the set $\{q \in \fin_{\pm k} : \langle p_n, q \rangle_{(-T)} \subseteq A_n\}$ with $$\bigcap\left\{ U_t \cap -(U_t)_1 :t \in U \text{ and} \, \supp \bigcup t \subseteq  \bigcup_{i<n+1}\supp p_i\right\}.$$ As before, we have $A_{n+1} \in \U$ and $A_n \supseteq A_{n+1}$. This completes the induction.

To check that $P$ is the desired block sequence, we prove the following properties:
\begin{enumerate}
	\item[(4)] $\langle p_m, \dots, p_n\rangle_{(-T)} \subseteq A_m$ for all $m \leq n < \omega$.
	\item[(5)] If $Q = (q_n)_{n < \omega} \leq_{(-T)} P$, then $(q_0, \dots, q_m) \in U$ for all $m < \omega$.
\end{enumerate}

We check (4) by downward induction on $m \leq n$ for $n < \omega$ fixed. The case $m = n$ follows from (1), while the case $m = n -1 $ follows using (1) and (2) to obtain $\langle p_{n-1}, p_n \rangle_{(-T)} \subseteq A_{n-1}$. Now suppose inductively that (4) holds for some $m \leq n$; we aim to show $\langle p_{m-1}, p_m, \dots, p_n \rangle_{(-T)} \subseteq A_{m-1}$. Take any $$q = \sum_{i = m-1}^n (-T)^{j_i}(p_i)$$ with $j_{m-1}, \dots, j_n \in \{0, \dots, k\}$ and $\min j_i = 0$. We consider two cases: Suppose first that there is $i > m-1$ such that $j_i = 0$. Then $$q' := \sum_{i=m}^n (-T)^{j_i}(p_i) \in \langle p_m, \dots, p_n\rangle_{(-T)} \subseteq A_m$$ where the inclusion comes from the inductive hypothesis. Then $q' \in A_m$ and so $$q \in \langle p_{m-1}, q \rangle_{(-T)} \subseteq A_{m-1}$$ by (2). Now suppose $j_i > 0$ for each $i > m-1$ (so that, in particular, $j_{m-1} = 0$). Let $l := \min\{j_m, \dots, j_n\} > 0$ and write $$q = p_{m-1} + (-T)^l \left(\sum_{i=m}^n (-T)^{j_i - l}(p_i)\right).$$ By the inductive hypothesis we have $$q'' := \sum_{i=m}^n (-T)^{j_i - l}(p_i) \in \langle p_m, \dots, p_n \rangle_{(-T)} \subseteq A_m,$$ and so $q \in \langle p_{m-1}, q'' \rangle_{(-T)} \subseteq A_{m-1}$ by (2). This completes the proof of (4).

Let $Q$ be as in the statement of (5) and fix $q = (q_0, \dots, q_m)$. We prove $q \in U$ by induction on $m < \omega$. If $m = 0$ then $q = (q_0)$ and by definition of $Q$ we can write $$q_0 = \sum_{i < l} (-T)^{j_i}(p_{n_i})$$ for some $l < \omega, n_0 < \dots < n_{l-1} < \omega$ and $j_i \in \{0, \dots, k\}$ with $\min j_i = 0$. Then $q_0 \in \langle p_{n_0}, \dots, p_{n_{l-1}} \rangle_{(-T)}$ and so by (4) we have $$q_0 \in A_{n_0} \subseteq A_0 \subseteq U_\varnothing$$ where we use the definition of $A_0$ above. Thus $q = (q_0) \in U$. Now suppose $m > 0$ and write $t := (q_0, \dots, q_{m-1})$ so that $q = (t, q_m)$ and $t \in U$ by the inductive assumption. Again, by definition of $Q$ we can write $$q_m = \sum_{i < l} (-T)^{j_i}(p_{n_i})$$ for some $l < \omega, n_0 < \dots < n_{l-1} < \omega$ and $j_i \in \{0, \dots, k\}$ with $\min j_i = 0$. Then $q_m \in \langle p_{n_0}, \dots, p_{n_{l-1}} \rangle_{(-T)}$ and so by (4) we have $q_m \in A_{n_0}$. Since $q_{m-1} < q_m$ it must be the case that $$\supp \bigcup t \subseteq  \bigcup_{i < n_0} \supp p_i.$$ Then by (3) we obtain $q_m \in U_t$ and so $q = (t, q_m) \in U$. This finishes the inductive proof of (5) and hence the proof of the lemma is complete. \end{proof}

In what follows, we will only need the following corollary of the above proof.

\begin{cor}\label{cor2} For every $\U$-tree $U$ with stem $\varnothing$ there is $P = (p_n)_{n < \omega} \in \fin_{\pm k}^{[\infty]}$ together with a sequence $A_0 \supseteq A_1 \supseteq \dots$ of subsets of $\fin_{\pm k}$ such that:
	\begin{enumerate}
		\item $A_n \subseteq U_t \cap -(U_t)_1$ for every $t \in U$ such that $\supp \bigcup t \subseteq \bigcup_{i < n} \supp p_i$,
		\item $\langle p_m, \dots, p_n\rangle_{(-T)} \subseteq A_m$ for all $m \leq n < \omega$.
	\end{enumerate}
\end{cor}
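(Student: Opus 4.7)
The plan is to simply read off the conclusion from the inductive construction carried out in the proof of Lemma \ref{lem3} rather than start from scratch. That proof already produced, for any $\U$-tree $U$ with empty stem, two sequences $A_0 \supseteq A_1 \supseteq \cdots$ and $p_0 < p_1 < \cdots$ that were required to satisfy the numbered conditions (1)--(3) listed in that proof, and it then derived condition (4) as a consequence. So I would open the proof of the corollary by saying: let $P = (p_n)_{n < \omega}$ and $(A_n)_{n < \omega}$ be the block sequence and nested family of elements of $\U$ produced by the construction in Lemma \ref{lem3}.

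With that in hand, both clauses are immediate. Clause (1) of the corollary is literally clause (3) of the inductive construction: the set $A_n$ was defined precisely so that it sits inside $U_t \cap -(U_t)_1$ for every $t \in U$ whose ``total support'' $\supp \bigcup t$ is contained in $\bigcup_{i<n} \supp p_i$. Clause (2) of the corollary is exactly the statement (4) proved by downward induction inside the proof of Lemma \ref{lem3}: for every $n < \omega$ and every $m \leq n$, $\langle p_m, \dots, p_n \rangle_{(-T)} \subseteq A_m$. Thus the two conclusions of the corollary are two of the intermediate facts already established, and no further argument is needed.

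The only thing worth flagging is that the corollary drops the final conclusion ``$Q \leq_{(-T)} P \Rightarrow Q \in [U]$'' of Lemma \ref{lem3} (that conclusion corresponded to step (5) in that proof), keeping only the combinatorial data (1) and (2). Since this is precisely the data that will be needed in the next section to iterate the construction and build branches compatible with a prescribed $S$-closed $\U$-tree, there is no obstacle here: the proof of the corollary is a one-line reference to the construction inside Lemma \ref{lem3}. Accordingly, my ``proof'' would consist entirely of citing the construction and its verified properties (3) and (4), and not repeat the inductive details.
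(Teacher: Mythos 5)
Your proposal is correct and matches the paper exactly: the paper itself introduces Corollary \ref{cor2} with the phrase ``we will only need the following corollary of the above proof,'' and gives no separate argument, since conclusions (1) and (2) are verbatim the intermediate facts (3) and (4) established in the inductive construction inside Lemma \ref{lem3}. Your one-line citation of that construction is precisely what is intended.
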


Recall that for a block sequence $P = (p_n)_{n < \omega}$ in $\fin_{\pm k}$, $\langle P \rangle_{\pm k}^{[\infty]}$ denotes the set of all infinite block subsequences of $P$ in $\fin_{\pm k}$. We then have the following key lemma which makes use of the $S$-closed $\U$-trees defined in the previous section.

\begin{lem}\label{lem4} Let $U$ be an $S$-closed $\U$-tree with $\stem(U) = \varnothing$. Then there is an infinite block sequence $P = (p_n)_{n < \omega}$ in $\fin_{\pm k}$ such that $\langle P\rangle_{\pm k}^{[\infty]} \subseteq ([U])_3$. \end{lem}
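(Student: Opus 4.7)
The plan is to apply Corollary \ref{cor2} to $U$, obtaining a block sequence $P = (p_n)_{n<\omega}$ and a decreasing chain $A_0 \supseteq A_1 \supseteq \dots$ of subsets of $\fin_{\pm k}$ satisfying the properties listed there. Given an arbitrary $Q = (q_n)_{n<\omega} \in \langle P\rangle_{\pm k}^{[\infty]}$, I would inductively construct $Q' = (q'_n)_{n<\omega} \in [U]$ with $\|q_n - q'_n\| \leq 3$ for every $n$, while maintaining the invariant that $\supp \bigcup(q'_0,\dots,q'_{n-1})$ stays inside $\bigcup_{i < m^*_n} \supp p_i$, where $m^*_n$ denotes the least $P$-index appearing in $q_n$. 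This invariant is what allows property~(1) of Corollary~\ref{cor2} to be invoked at each stage to extract an element of $U_{(q'_0,\dots,q'_{n-1})}$ from $A_{m^*_n}$.

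The technical heart of the argument is handling the mixed signs permitted in $\langle P\rangle_{\pm k}$ but not in $\langle P\rangle_{(-T)}$. Rewriting $q_n$ in the form $\sum_i \sigma_i (-T)^{j_i}(p_{m_i})$ via the identity $\varep T^j = \varep(-1)^j(-T)^j$, I would choose a sign $\sigma^* \in \{\pm 1\}$ that agrees with $\sigma_{i_0}$ for some $i_0$ with $j_{i_0} = 0$ (such $i_0$ exists since $\min_i j_i = 0$). I would then build a uniformly-signed approximant $\tilde q_n$ by keeping every summand with $\sigma_i = \sigma^*$ unchanged and replacing each wrong-sign summand ($\sigma_i = -\sigma^*$) by $\sigma^*(-T)^{j_i+1}(p_{m_i})$, or by zero when $j_i = k-1$. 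A routine pointwise calculation gives $\|\tilde q_n - q_n\| \leq 1$, and by construction $\sigma^* \tilde q_n \in \langle p_{m^*_n},\dots,p_{M_n}\rangle_{(-T)} \subseteq A_{m^*_n}$, where $M_n$ is the greatest $P$-index appearing in $q_n$.

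By the inductive support invariant, property~(1) of Corollary~\ref{cor2} yields $A_{m^*_n} \subseteq U_t \cap -(U_t)_1$ for $t := (q'_0,\dots,q'_{n-1})$. If $\sigma^* = +1$, then $\tilde q_n \in U_t$ already, and I would set $q'_n := \tilde q_n$, obtaining $\|q'_n - q_n\| \leq 1$. If $\sigma^* = -1$, then $-\tilde q_n \in A_{m^*_n}$ together with the inclusion in $-(U_t)_1$ produces some $u \in U_t$ with $\|u - \tilde q_n\| \leq 1$; however the support of $u$ could in principle spill beyond $\bigcup_{i \leq M_n} \supp p_i$ and break the invariant. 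Here the $S$-closure of $U$ is essential: I would take $q'_n := S(u)$, which lies in $U_t$ by $S$-closure and satisfies $\supp S(u) \subseteq \supp \tilde q_n$ since $|u(m)| \geq 2$ forces $|\tilde q_n(m)| \geq 1$. Combining $\|S(u) - u\| \leq 1$ with the previous bounds then yields $\|q'_n - q_n\| \leq 3$ and preserves the support invariant.

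The main obstacle is precisely this $\sigma^* = -1$ case: the subsymmetry-based inclusion $A_{m^*_n} \subseteq -(U_t)_1$ only hands us an element of $U_t$ close to $\tilde q_n$ in sup norm, with no a priori control on its support, and the $S$-closure of $U$ is what lets us prune the stray magnitude-$1$ entries of $u$ without leaving the tree. One should also verify that $q'_n$ genuinely lies in $\fin_{\pm k}$; in particular, $S(u)$ retains a $\pm k$-valued coordinate of $u$ whenever $k \geq 2$. The degenerate case $k=1$ can be handled separately by setting $q'_n := -\tilde q_n$, which belongs to $A_{m^*_n} \subseteq U_t$ directly and sits within $2$ of $q_n$ with the correct support.
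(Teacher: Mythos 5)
Your proof is correct and follows the paper's argument essentially step for step: after invoking Corollary~\ref{cor2}, the paper's Case~1/Case~2 split with subcases (a)/(b) based on the parity of $j_i$ is precisely your reparametrization $\sigma_i = \varep_i(-1)^{j_i}$ together with the choice of a dominant sign $\sigma^*$ attached to a zero-exponent term, and the negative-sign case is handled identically via the subsymmetric inclusion $A_{m^*_n} \subseteq -(U_t)_1$ followed by $S$-closure to prune the resulting node's support. Your remark about $k=1$ is a genuine observation the paper glosses over, but the lemma is vacuously true there since no $S$-closed $\U$-tree on $\fin_{\pm 1}$ exists (and the main theorem only ever applies the lemma with $4k \geq 4$).
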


\begin{proof} Find an infinite block sequence $P$ as in Corollary \ref{cor2}. We claim that $P$ satisfies the conclusion of the lemma. To see this, fix an infinite block subsequence $Q = (q_n)_{n < \omega}$ of $P$. For convenience, we fix some notation: For each $n < \omega$ let $I_n$ be the smallest set of non-negative integers such that $$q_n \in \langle p_i : i \in I_n \rangle_{\pm k}.$$ Notice that since $Q$ is a block subsequence of $P$ we have $\max I_n < \min I_m$ whenever $n < m$.

We will find a block sequence $Q' = (q_n')_{n <\omega} \in [U]$ such that $||q_n - q_n'|| \leq 3$ and $\supp q_n' \subseteq \supp q_n$ for all $n < \omega$. We define $Q'$ recursively as follows. For $n = 0$, write $$q_0 = \sum_{i \in I_0} \varep_i T^{j_i}(p_i)$$ for some (necessarily unique) $\varep_i \in \{\pm 1\}$ and $j_i < k$ such that $\min j_i = 0$. We consider the following two cases:

\begin{case1} There is $i \in I_0$ such that $\varep_i = +1$ and $j_i = 0$.\end{case1}
\noindent For each $i \in I_0$, set $r_i := \varep_i T^{j_i}(p_i)$ for convenience. We consider the following two subcases:
\begin{enumerate}[(a)]
	\item $\varep_i = +1$ and $j_i$ is even, or $\varep_i = -1$ and $j_i$ is odd. In either case, set $r_i' := r_i$ and note that $r_i' = (-T)^{j_i}(p_i)$.
	\item $\varep_i = +1$ and $j_i$ is odd, or $\varep_i = -1$ and $j_i$ is even. In either case, set $r_i' := T(r_i)$ and note that $r_i' = (-T)^{j_i + 1}(p_i)$.
\end{enumerate}
We then set $$q_0' := \sum_{i \in I_0} r_i'.$$ Note that $\supp q_0' \subseteq \supp q_0$ and $q_0' \in \langle p_i : i \in I_0 \rangle_{(-T)}$ by the assumption given by Case 1. Since $||r_i - r_i'|| \leq 1$ for all $i \in I_0$ we have $||q_0 - q_0'|| \leq 1$. Furthermore, by Corollary \ref{cor2} we have $$\langle p_i : i \in I_0 \rangle_{(-T)} \subseteq A_{\min I_0}$$ (using the notation of Corollary \ref{cor2}) and so $q_0' \in U_t$ for every $t \in U$ such that $$\supp \bigcup t \subseteq \bigcup_{i < \min I_0} \supp p_i.$$ In particular, $q_0' \in U_\varnothing$ and so $(q_0') \in U$.

\begin{case2} For every $i \in I_0$, if $j_i = 0$ then $\varep_i = -1$.\end{case2}
\noindent Apply Case 1 to $-q_0$ to obtain $r \in \langle p_i : i \in I_0 \rangle_{(-T)}$ such that $||(-q_0) - r || \leq 1$ and $\supp r \subseteq \supp (-q_0)$. By Corollary \ref{cor2} we have  $$\langle p_i : i \in I_0 \rangle_{(-T)} \subseteq A_{\min I_0}$$  and so $r \in U_t \cap -(U_t)_1$ for every $t \in U$ such that $$\supp \bigcup t \subseteq \bigcup_{i < \min I_0} \supp p_i.$$ In particular, $-r \in (U_\varnothing)_1$ and so there is $r' \in U_\varnothing$ such that $||(-r) -r' || \leq 1$. Since $U$ is $S$-closed, we have $(S(r')) \in U$ and so we set $q_0' := S(r')$. Note that by definition of $S$ we have $$\supp q_0' \subseteq \supp (-r) = \supp r \subseteq \supp q_0.$$ Furthermore, using the fact that $|| r' - S(r')|| \leq 1$ we have $$||q_0 - q_0'|| \leq ||q_0 - (-r)|| + ||(-r) - r'|| + ||r' - S(r')|| \leq 3$$ and so $q_0'$ satisfies our requirements.

Now assume $n > 0$ and suppose we have defined $q_0', \dots, q_{n-1}' \in \fin_{\pm k}$ such that $s := (q_0', \dots, q_{n-1}') \in U$, $||q_i - q_i'|| \leq 3$ and $\supp q_i' \subseteq \supp q_i$ for all $i < n$. Write $$q_n = \sum_{i \in I_n} \varep_i T^{j_i}(p_i)$$ for some $\varep_i \in \{\pm 1\}$ and $j_i < k$ such that $\min j_i = 0$. Note that since $$\supp q_i' \subseteq \supp q_i \subseteq \bigcup_{j \in I_i} \supp p_i,$$ we must have $$\supp \bigcup s \subseteq \bigcup_{i < \min I_n} \supp p_i.$$ As in the base case of the induction, we consider the following two cases:

\begin{case1} There is $i \in I_n$ such that $\varep_i = +1$ and $j_i = 0$.\end{case1}
\noindent For each $i \in I_n$, set $r_i := \varep_i T^{j_i}(p_i)$ for convenience. We consider the following two subcases:
\begin{enumerate}[(a)]
	\item $\varep_i = +1$ and $j_i$ is even, or $\varep_i = -1$ and $j_i$ is odd. In either case, set $r_i' := r_i$ and note that $r_i' = (-T)^{j_i}(p_i)$.
	\item $\varep_i = +1$ and $j_i$ is odd, or $\varep_i = -1$ and $j_i$ is even. In either case, set $r_i' := T(r_i)$ and note that $r_i' = (-T)^{j_i + 1}(p_i)$.
\end{enumerate}
We then set $$q_n' := \sum_{i \in I_n} r_i'.$$ As before, we have $\supp q_n' \subseteq \supp q_n$ and $||q_n - q_n'|| \leq 1$. Furthermore, we have $$\langle p_i : i \in I_n \rangle_{(-T)} \subseteq A_{\min I_n}$$ and so $q_n' \in U_t$ for every $t \in U$ such that $$\supp \bigcup t \subseteq \bigcup_{i < \min I_n} \supp p_i.$$ In particular, $q_n' \in U_s$ and so $(s, q_n') \in U$.

\begin{case2} For every $i \in I_n$, if $j_i = 0$ then $\varep_i = -1$.\end{case2}
\noindent Apply Case 1 to $-q_n$ to obtain $r \in \langle p_i : i \in I_n \rangle_{(-T)}$ such that $||(-q_n) - r || \leq 1$ and $\supp r \subseteq \supp (-q_n)$. As before, $r \in U_t \cap -(U_t)_1$ for every $t \in U$ such that $$\supp \bigcup t \subseteq \bigcup_{i < \min I_n} \supp p_i.$$ In particular, $-r \in (U_s)_1$ and so there is $r' \in U_s$ such that $||(-r) -r' || \leq 1$. Since $U$ is $S$-closed, we have $(s, S(r')) \in U$ and so we set $q_n' := S(r')$. As before, we check that $q_n'$ satisfies our requirements. This completes the inductive construction of $Q'$. It is clear from the above construction that $Q' \in [U]$ and $||q_n - q_n'|| \leq 3$ for all $n < \omega$ and so $Q \in ([U])_3$. \end{proof}

To finish the proof of Theorem \ref{mainthm} we will need the following mapping which was originally used in \cite{K} to give an alternate proof of Gowers' theorem. Given $m \in \N$, let $\Phi_m : \fin_{\pm 2m} \rightarrow \fin_{\pm m}$ be defined by setting, for $p \in \fin_{\pm 2m}$ and $n < \omega$,
\[ \Phi_m (p)(n) := \begin{cases} 
      \frac{p(n)}{2} & \text{ if $p(n)$ is even}, \\
      \frac{p(n)-1}{2} & \text{ if $p(n)>0$ and $p(n)$ is odd}, \\
      \frac{p(n)+1}{2} & \text{ if $p(n)<0$ and $p(n)$ is odd}.
   \end{cases}
\]
The following lemma is easy to check.
\begin{lem}\label{lem5} For each $m \in \N$, the mapping $\Phi_m$ has the following properties:
	\begin{enumerate}[(i)]
	\item $\Phi_m$ is a surjective homomorphism of partial semigroups which, in addition, satisfies $\Phi_m(-p) = -\Phi_m(p)$ for every $p \in \fin_{\pm 2m}$.
	\item For every $p_0 < p_1 \in \fin_{\pm 2m}$ and every $j_0, j_1 < k + 1$ with $\min\{j_0, j_1\} = 0$, we have $$\Phi_m\left(T^{2j_0}(p_0) + T^{2j_1}(p_1)\right) = T^{j_0}(\Phi_m(p_0)) + T^{j_1}(\Phi_m(p_1)).$$
	\item For every $p_0, p_1 \in \fin_{\pm 2m}$ and every $l < \omega$, we have $$||p_0 - p_1|| \leq 2l \implies ||\Phi_m(p_0) - \Phi_m(p_1)||\leq l.$$
\end{enumerate}
\end{lem}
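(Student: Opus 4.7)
The plan is to verify each of the three clauses coordinate by coordinate. Since $\Phi_m$ is defined pointwise, and since $T$, coordinate-wise sum on disjoint supports, sign-flip, and the $\ell_\infty$ norm are also defined pointwise, everything reduces to checking statements about a single integer in $\{0,\pm 1,\dots,\pm 2m\}$ via the three defining cases of $\Phi_m$. I would handle the three clauses as three short, independent arguments.

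For (i), surjectivity is immediate by lifting $q \in \fin_{\pm m}$ to $p(n) := 2q(n)$, which satisfies $\Phi_m(p) = q$ and lives in $\fin_{\pm 2m}$ iff $q \in \fin_{\pm m}$. The homomorphism property follows because when $p_0 < p_1$ the supports of $\Phi_m(p_0)$ and $\Phi_m(p_1)$ are contained in $\supp p_0$ and $\supp p_1$ and hence remain disjoint, while $\Phi_m(0) = 0$; so on each coordinate $\Phi_m(p_0 + p_1)$ and $\Phi_m(p_0) + \Phi_m(p_1)$ read off the same value. The identity $\Phi_m(-p) = -\Phi_m(p)$ is a direct three-case inspection on each coordinate.

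For (ii), since $p_0 < p_1$ and $T$ never enlarges supports, both sides split as pointwise sums on disjoint supports; thus the identity reduces to the one-vector statement $\Phi_m \circ T^{2j} = T^j \circ \Phi_m$ on $\fin_{\pm 2m}$. Using the closed forms $T^{2j}(a) = \max(a-2j,0)$ for $a > 0$ and $T^{2j}(a) = \min(a+2j,0)$ for $a < 0$, this becomes a brief parity $\times$ sign case analysis: the factor $2$ in the exponent is precisely what synchronizes the two halves, so e.g.\ for $a = 2b+1 > 0$ one gets $T^{2j}(a) = 2(b-j)+1$ when $b \geq j$ and $0$ otherwise, both yielding $\Phi_m(T^{2j}(a)) = \max(b-j,0) = T^j(\Phi_m(a))$. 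The remaining three subcases are symmetric.

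For (iii), I would first rewrite $\Phi_m$ in the unified form $\Phi_m(a) = \mathrm{sign}(a)\lfloor |a|/2 \rfloor$, i.e.\ division by $2$ rounded toward zero, and then fix a coordinate $n$ and set $a := p_0(n)$, $b := p_1(n)$ with $|a-b| \leq 2l$. Writing $\Phi_m(a) = (a - \varep_a)/2$ with $\varep_a \in \{-1,0,+1\}$ of the same sign as $a$ (or zero), the key step is a sign dichotomy. If $a$ and $b$ have the same sign or one is zero, then $\varep_a,\varep_b$ lie in a common two-element set, so $|\varep_a - \varep_b| \leq 1$ and $|\Phi_m(a) - \Phi_m(b)| \leq |a-b|/2 + 1/2 \leq l + 1/2$, which forces $\leq l$ since the quantity is an integer. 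If $a,b$ have strictly opposite signs, then $\Phi_m(a)$ and $\Phi_m(b)$ round toward zero in opposite directions, so $|\Phi_m(a) - \Phi_m(b)| = \lfloor |a|/2 \rfloor + \lfloor |b|/2 \rfloor \leq (|a|+|b|)/2 = |a-b|/2 \leq l$. This sign split is the only place in the lemma with genuine content: without it one would only obtain $l+1$ on the right-hand side.
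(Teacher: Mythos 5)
The paper states Lemma \ref{lem5} without a written proof (``The following lemma is easy to check''), so there is nothing to compare against directly; what matters is whether your verification is complete and correct, and it is. The coordinate-wise reduction is the right framing for all three clauses. In (i), lifting $q$ to $2q$ gives surjectivity (and you correctly note that $2q$ achieves $\pm 2m$ exactly when $q$ achieves $\pm m$); the homomorphism and sign-flip identities are indeed pointwise checks. In (ii), the reduction to the one-vector identity $\Phi_m\circ T^{2j}=T^j\circ\Phi_m$ is exactly the point, and your parity-by-sign case analysis handles it; note that $T(-p)=-T(p)$, so the negative branch really does follow from the positive one together with clause (i). In (iii), the sign dichotomy is the only step with content: in the like-sign case, writing $\Phi_m(a)=(a-\varep_a)/2$ with $\varep_a\in\{0,\pm1\}$ of the same sign as $a$ and using that $\Phi_m(a)-\Phi_m(b)$ is an integer gives the bound $l$ rather than $l+1$, and in the opposite-sign case the two roundings-toward-zero reinforce each other so that $\lfloor |a|/2\rfloor+\lfloor |b|/2\rfloor\le |a-b|/2$. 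A slightly cleaner finish to the like-sign case is to observe that $\varep_a-\varep_b\equiv a-b\pmod 2$, so when $|a-b|$ is even one automatically has $\varep_a=\varep_b$ and the bound $l$ drops out without invoking integrality; but your version is also correct.
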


Now, for $k \in \N$ fixed as in the previous sections, let $\Psi : \fin_{\pm 4k} \rightarrow \fin_{\pm k}$ be given by $\Psi := \Phi_k \circ \Phi_{2k}.$ Using the properties listed in Lemma \ref{lem5} it is easy to verify that $\Psi$ is a surjective homomorphism which satisfies:
\begin{enumerate}[(a)]
	\item For every $p_0 < p_1 \in \fin_{\pm 4k}$ and every $j_0, j_1 < k + 1$ with $\min\{j_0, j_1\} = 0$, we have $$\Psi\left(T^{4j_0}(p_0) + T^{4j_1}(p_1)\right) = T^{j_0}(\Psi(p_0)) + T^{j_1}(\Psi(p_1)).$$
	\item For every $p_0, p_1 \in \fin_{\pm 4k}$, if $||p_0 - p_1|| \leq 4$ then $||\Psi(p_0) - \Psi(p_1)||\leq 1$.
\end{enumerate}
We extend $\Psi$ to $\fin_{\pm 4k}^{[\infty]}$ by setting $$\Psi((p_n)_{n < \omega}) := (\Psi(p_n))_{n < \omega}.$$ It is straightforward to check that $\Psi$ is continuous with respect to the usual metrizable topologies. Furthermore, note that if $P$ and $P'$ are two block sequences in $\fin_{\pm 4k}$ which satisfy $||P-P'|| \leq 4$, then $||\Psi(P) - \Psi(P')|| \leq 1$. We are now ready to finish the proof of the main theorem.

\begin{proof}[Proof of Theorem \ref{mainthm}]
Let $c : \fin_{\pm k}^{[\infty]} \rightarrow r$ be Souslin measurable. We define a colouring $\widetilde{c} : \fin_{\pm 4k}^{[\infty]} \rightarrow r$ by setting $\widetilde{c} := c \circ \Psi$. Then $\widetilde{c}$ is Souslin measurable since the collection $$\{\X \subseteq \fin_{\pm k}^{[\infty]} : \Psi^{-1}(\X) \subseteq \fin_{\pm 4k}^{[\infty]} \text{ is Souslin measurable}\}$$ is a field of subsets of $\fin_{\pm k}^{[\infty]}$ which contains the open sets (by continuity) and is closed under the Souslin operation, and hence contains the Souslin measurable subsets of $\fin_{\pm k}^{[\infty]}$. By Corollary \ref{cor1} there are $i < r$ and a $\U$-tree $V$ with stem $\varnothing$ such that $[V] \subseteq \widetilde{c}^{-1}\{i\}$. Applying Lemma \ref{lem2}, find an $S$-closed $\U$-tree $U$ such that $[U] \subseteq ([V])_1$; in particular we get $$[U] \subseteq \left(\widetilde{c}^{-1}\{i\}\right)_1.$$ Since $\U$ is $S$-closed, by Lemma \ref{lem4} we can find an infinite block sequence $\widetilde{P} = (\widetilde{p_n})_{n < \omega}$ in $\fin_{\pm 4k}$ such that $\langle \widetilde{P} \rangle_{\pm 4k}^{[\infty]} \subseteq ([U])_3$ and hence $$\langle \widetilde{P}\rangle_{\pm 4k}^{[\infty]} \subseteq \left(\widetilde{c}^{-1}\{i\}\right)_4.$$

Let $P := \Psi(\widetilde{P}) \in \fin_{\pm k}^{[\infty]}$ and set $p_n := \Psi(\widetilde{p_n})$ for each $n<\omega$. We claim that $P$ satisfies $$\langle P\rangle_{\pm k}^{[\infty]} \subseteq \left(c^{-1}\{i\}\right)_1.$$ Indeed, if $Q = (q_n)_{n < \omega} \in \fin_{\pm k}^{[\infty]}$ is an infinite block subsequence of $P$, then for each $n < \omega$ we have $$q_n = \sum_{i < m} \varep_i T^{j_i}(p_{n_i})$$ for some $\varep_i \in \{\pm 1\}, n_0 < \dots < n_{m-1}$ and $j_i < k$ such that $\min j_i = 0$. Then using property (a) of $\Psi$ listed above we see that $q_n = \Psi(\widetilde{q_n})$, where $$\widetilde{q_n} := \sum_{i < m} \varep_i T^{4j_i}(\widetilde{p_{n_i}}) \in \langle \widetilde{P} \rangle_{\pm k}$$ and so, setting $\widetilde{Q} := (\widetilde{q_n})_{n<\omega}$, we see that $Q = \Psi(\widetilde{Q})$. Since $\widetilde{Q}$ is a block subsequence of $\widetilde{P}$, by our choice of $\widetilde{P}$ we can find $Q' \in \widetilde{c}^{-1}\{i\}$ such that $||\widetilde{Q} - Q' || \leq 4$. Then, as observed above, property (b) of $\Psi$ implies $||\Psi(\widetilde{Q}) - \Psi(Q')|| \leq 1$. Since $$ i = \widetilde{c}(Q') = c(\Psi(Q'))$$ we obtain $\Psi(Q') \in c^{-1}\{i\}$ and so $Q \in \left(c^{-1}\{i\}\right)_1$ as required.
\end{proof}

In fact, we can do a bit better: Given an infinite block sequence $P$ in $\fin_{\pm k}$, the proof of Lemma \ref{lem1} (from either \cite{AT} or \cite{K}) can be adapted to show the existence of an ultrafilter $\U$ on the partial semigroup $\langle P \rangle_{\pm k}$ which has the properties listed in Lemma \ref{lem1}. One can then develop the theory of $\U$-trees on $\langle P \rangle_{\pm k}^{[<\infty]}$ and prove a corresponding analogue of Corollary \ref{cor1}. By equipping $\langle P \rangle_{\pm k}^{[\infty]}$ with its natural analogue of the metrizable topology and replacing $\fin_{\pm k}^{[\infty]}$ with $\langle P \rangle_{\pm k}^{[\infty]}$ in the proof of the main result, we obtain the following relativized version of Theorem \ref{mainthm}.

\begin{thm}\label{mainrel} For every $k, r \in \N$, every infinite block sequence $P$ in $\fin_{\pm k}$ and every Souslin measurable $c : \fin_{\pm k}^{[\infty]} \rightarrow r$ there are $i < r$ and an infinite block sequence $Q \leq P $ such that $$\langle Q \rangle_{\pm k}^{[\infty]} \subseteq \left(c^{-1}\{i\}\right)_1.$$ \end{thm}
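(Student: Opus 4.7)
The plan is to repeat the proof of Theorem \ref{mainthm} in a relativized setting, replacing the ambient spaces $\fin_{\pm k}$ and $\fin_{\pm 4k}$ by partial subsemigroups generated by $P$. First, lift $P = (p_n)_{n < \omega}$ to $\widetilde{P} := (4 p_n)_{n<\omega} \in \fin_{\pm 4k}^{[\infty]}$; a direct calculation using the definitions of $\Phi_{2k}$ and $\Phi_k$ yields the identity $\Psi(T^j(4v)) = T^{\lceil j/4 \rceil}(v)$ for every $v \in \mathbb{Z}$ and $j \geq 0$. Consequently $\Psi$ restricts to a surjective homomorphism $\langle \widetilde{P} \rangle_{\pm 4k} \to \langle P \rangle_{\pm k}$ that sends block subsequences of $\widetilde{P}$ to block subsequences of $P$ (terms with $\lceil j_i/4 \rceil = k$ contribute zero, while the term coming from $j_i = 0$ survives and guarantees that the $\min$ of the new exponents is $0$).

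Next, redo the constructions of Section 2 inside $\langle \widetilde{P} \rangle_{\pm 4k}$. The space of cofinite ultrafilters on $\langle \widetilde{P} \rangle_{\pm 4k}$ is a compact Hausdorff right-topological semigroup under $+$, so the idempotent argument of \cite{AT,K} produces a subsymmetric ultrafilter $\U$ satisfying $\U + (-T)^j \U = (-T)^j \U + \U = \U$ for every $j \in \{0, \dots, 4k\}$. Define $\U$-trees on $\langle \widetilde{P} \rangle_{\pm 4k}^{[<\infty]}$, equip $\langle \widetilde{P} \rangle_{\pm 4k}^{[\infty]}$ with the analogous metrizable and $\U$-topologies, and observe that the proofs of the ultra-Ellentuck theorem and Corollary \ref{cor1} transfer without change. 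Apply the relativized Corollary \ref{cor1} to $\widetilde{c} := (c \circ \Psi) \restriction \langle \widetilde{P} \rangle_{\pm 4k}^{[\infty]}$ (which is Souslin measurable by continuity of $\Psi$) to obtain $i < r$ and a $\U$-tree $V$ with $\stem(V) = \varnothing$ and $[V] \subseteq \widetilde{c}^{-1}\{i\}$. The relativized Lemmas \ref{lem2} and \ref{lem4} then produce an infinite block sequence $\widetilde{Q} \leq \widetilde{P}$ with $\langle \widetilde{Q} \rangle_{\pm 4k}^{[\infty]} \subseteq (\widetilde{c}^{-1}\{i\})_4$. Setting $Q := \Psi(\widetilde{Q})$, the first paragraph yields $Q \leq P$, and property (b) of $\Psi$ gives $\langle Q \rangle_{\pm k}^{[\infty]} \subseteq (c^{-1}\{i\})_1$.

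The main obstacle will be verifying that the ultra-Ramsey machinery of Section 2 genuinely transfers to a partial subsemigroup: one must check that $\langle \widetilde{P} \rangle_{\pm 4k}$ is cofinite and that its space of cofinite ultrafilters supports the required continuous semigroup structure with respect to all iterates of $-T$, so that the idempotent construction of Lemma \ref{lem1} can be carried out inside it to yield the subsymmetric $\U$. A smaller subtlety is that $\langle \widetilde{P} \rangle_{\pm 4k}$ need not be closed under $S$, so the $S$-closed tree produced by Lemma \ref{lem2} must be allowed to live in the larger tree $\fin_{\pm 4k}^{[<\infty]}$; this is harmless since Lemma \ref{lem4} uses only $S$-closure of the tree $U$, and the block sequence $\widetilde{Q}$ it extracts via Corollary \ref{cor2} still lies in $\langle \widetilde{P} \rangle_{\pm 4k}$ by construction.
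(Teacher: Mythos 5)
Your proposal is correct and follows the same route the paper sketches: relativize the ultrafilter and $\U$-tree machinery to the partial subsemigroup generated by $P$ (via its lift $\widetilde{P}$ in $\fin_{\pm 4k}$) and then rerun the proof of Theorem~\ref{mainthm}. You add genuinely useful detail that the paper leaves implicit — the explicit choice $\widetilde{P}=4P$, the computation $\Psi(T^j(4v)) = T^{\lceil j/4\rceil}(v)$ showing $\Psi$ restricts to a surjection $\langle \widetilde{P}\rangle_{\pm 4k}\to\langle P\rangle_{\pm k}$ preserving block subsequences, and the observation that the $S$-closed tree from Lemma~\ref{lem2} may leave $\langle \widetilde{P}\rangle_{\pm 4k}^{[<\infty]}$ while the block sequence extracted by Corollary~\ref{cor2} still lies in $\langle \widetilde{P}\rangle_{\pm 4k}$ — and these are exactly the points a careful reader would need to check.
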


The previous result can be used to ``diagonalize'' Theorem \ref{mainthm} as follows. First note that, for each $j < k \in \N$, the $j^{\mathrm{th}}$ iterate of the tetris operation $T^{(j)} : \fin_{\pm k} \rightarrow \fin_{\pm (k-j)}$ can be extended to $T^{(j)} : \fin_{\pm k}^{[\infty]} \rightarrow \fin_{\pm (k-j)}^{[\infty]}$ by setting $$T^{(j)}((p_n)_{n < \omega}) := (T^{(j)}(p_n))_{n < \omega}.$$ We then have the following:

\begin{cor} For every $k, r \in \N$ and every Souslin measurable (with respect to the disjoint union topology) colouring $$c : \bigcup_{j = 1}^k \fin_{\pm j}^{[\infty]} \rightarrow r$$ there are $i_1, \dots, i_k < r$ and $P \in \fin_{\pm k}^{[\infty]}$ such that $$\langle T^{(k-j)} (P) \rangle_{\pm j}^{[\infty]} \subseteq \left(c^{-1}\{i_j\}\right)_1$$ for each $j = 1, \dots, k$. \end{cor}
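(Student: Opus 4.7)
The plan is to prove the corollary by finite downward induction on $j$, at each stage applying Theorem \ref{mainrel} to a tetris-image and then lifting the resulting block subsequence back up to $\fin_{\pm k}$. First note that, since the inclusion $\fin_{\pm j}^{[\infty]} \hookrightarrow \bigcup_{l=1}^{k} \fin_{\pm l}^{[\infty]}$ is continuous, each restriction $c_j := c|_{\fin_{\pm j}^{[\infty]}}$ is Souslin measurable with respect to the metrizable topology on $\fin_{\pm j}^{[\infty]}$. I would construct a decreasing chain of block sequences $P^{(k)} \geq P^{(k-1)} \geq \dots \geq P^{(1)}$ in $\fin_{\pm k}^{[\infty]}$ together with colours $i_k, \dots, i_1 < r$ satisfying $\langle T^{(k-j)}(P^{(j)}) \rangle_{\pm j}^{[\infty]} \subseteq \left(c^{-1}\{i_j\}\right)_1$ for each $j$, and then take $P := P^{(1)}$.

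The base case $j = k$ is given directly by Theorem \ref{mainthm} applied to $c_k$. For the inductive step from $j+1$ to $j$, form $T^{(k-j)}(P^{(j+1)}) \in \fin_{\pm j}^{[\infty]}$ and apply Theorem \ref{mainrel} relativized to this block sequence with colouring $c_j$, obtaining a block subsequence $R \leq T^{(k-j)}(P^{(j+1)})$ and a colour $i_j$ with $\langle R \rangle_{\pm j}^{[\infty]} \subseteq \left(c^{-1}\{i_j\}\right)_1$. To lift $R$ back up, I would write each $R_n$ in its canonical form
\[ R_n = \sum_{i} \varep_i T^{l_i}\bigl(T^{(k-j)}((P^{(j+1)})_{n_i})\bigr) = \sum_{i} \varep_i T^{l_i + (k-j)}\bigl((P^{(j+1)})_{n_i}\bigr) \]
with $l_i < j$ and $\min_i l_i = 0$, and set $(P^{(j)})_n := \sum_{i} \varep_i T^{l_i}\bigl((P^{(j+1)})_{n_i}\bigr) \in \langle P^{(j+1)} \rangle_{\pm k}$. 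This lift genuinely lies in $\fin_{\pm k}$ because the $l_{i_0} = 0$ summand contributes $\varep_{i_0}(P^{(j+1)})_{n_{i_0}}$, whose $\pm k$-values survive in the sum by support-disjointness; thus $P^{(j)} := ((P^{(j)})_n)_n$ is a block sequence in $\fin_{\pm k}^{[\infty]}$ satisfying $T^{(k-j)}(P^{(j)}) = R$ and $P^{(j)} \leq P^{(j+1)}$.

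To verify that $P := P^{(1)}$ meets the corollary's conclusion, I would use the general fact that $T^{(k-j)}$ carries $\langle Q \rangle_{\pm k}$ into $\langle T^{(k-j)}(Q) \rangle_{\pm j}$ for every block sequence $Q$ in $\fin_{\pm k}$. Expanding a typical element $\sum_i \varep_i T^{l_i}(q_{n_i})$ with $\min l_i = 0$, the terms with $l_i \geq j$ vanish under $T^{(k-j)}$ (since $T^{l_i+(k-j)}$ kills every element of $\fin_{\pm k}$ whenever $l_i + (k-j) \geq k$), while the surviving terms produce an element of the target partial subsemigroup, using that $T$ is a partial-semigroup homomorphism satisfying $T(-p) = -T(p)$. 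Since $P \leq P^{(j)}$, this yields $T^{(k-j)}(P) \leq T^{(k-j)}(P^{(j)})$, and hence $\langle T^{(k-j)}(P) \rangle_{\pm j}^{[\infty]} \subseteq \langle T^{(k-j)}(P^{(j)}) \rangle_{\pm j}^{[\infty]} \subseteq \left(c^{-1}\{i_j\}\right)_1$, as required.

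The main obstacle is the lifting step: one needs the canonical decomposition of each $R_n$ to be unique (so that $(P^{(j)})_n$ is well-defined independent of the representation), and one must verify that the lifted sequence is a genuine block sequence in $\fin_{\pm k}^{[\infty]}$ (each term achieving $\pm k$, with consecutive supports in the correct order). Both points follow from the support-disjointness of the $(P^{(j+1)})_{n_i}$'s together with the way $T$ interacts with signs and supports, but they need to be carried out with some care. Once this is in place, the remaining compatibility between $T^{(k-j)}$ and the partial semigroup structure is routine.
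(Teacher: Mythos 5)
Your proposal is correct, but it is organized in the opposite direction from the paper's proof, and the difference matters for economy. The paper runs the induction \emph{upward}: it first applies Theorem \ref{mainthm} to get $P_1 \in \fin_{\pm 1}^{[\infty]}$, then takes an \emph{arbitrary} preimage $Q_2 \in \fin_{\pm 2}^{[\infty]}$ with $T(Q_2) = P_1$, refines it via Theorem \ref{mainrel} to $P_2 \leq Q_2$, and continues up to $P_k$; the final $P := P_k$ then satisfies $T^{(k-j)}(P) \leq P_j$ for every $j$ because $T(P) \leq T(Q)$ whenever $P \leq Q$. The lifting there is trivial --- you only need surjectivity of $T$, and nothing about the lift has to be verified, because the refinement to $P_j$ happens \emph{after} the lift. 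Your version runs \emph{downward}: you project $P^{(j+1)}$ under $T^{(k-j)}$, refine inside $\fin_{\pm j}^{[\infty]}$ to get $R$, and then must explicitly lift $R$ back to a block sequence $P^{(j)} \leq P^{(j+1)}$ in $\fin_{\pm k}^{[\infty]}$ with $T^{(k-j)}(P^{(j)}) = R$. You correctly identify the obligations this incurs (uniqueness of the canonical decomposition of each $R_n$ over $\langle T^{(k-j)}(P^{(j+1)})\rangle_{\pm j}$, the lifted terms achieving $\pm k$, and preservation of the block ordering), and they do all go through via support-disjointness and the identities $T(-p)=-T(p)$, $T^{a} \circ T^{b} = T^{a+b}$. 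What your route buys is that the whole decreasing chain stays inside $\fin_{\pm k}^{[\infty]}$, so the final verification $\langle T^{(k-j)}(P)\rangle_{\pm j}^{[\infty]} \subseteq \langle T^{(k-j)}(P^{(j)})\rangle_{\pm j}^{[\infty]}$ is immediate from $P \leq P^{(j)}$; what it costs is the nontrivial lifting step, which the paper simply avoids by refining after lifting rather than before. Both are correct; the paper's is shorter because the only fact it needs beyond Theorems \ref{mainthm} and \ref{mainrel} is that $T$ is surjective and monotone with respect to $\leq$.
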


\begin{proof} Note that each canonical inclusion $$\iota_j : \fin_{\pm j}^{[\infty]} \rightarrow \bigcup_{j = 1}^k \fin_{\pm j}^{[\infty]}$$ is continuous and so, as in the proof of Theorem \ref{mainthm}, each Souslin measurable colouring of the union induces a Souslin measurable colouring of $\fin_{\pm j}^{[\infty]}$ by composing with $\iota_j$, for each $j \in \{1,\dots,k\}$. Thus by Theorem \ref{mainthm} we can find $P_1 \in \fin_{\pm 1}^{[\infty]}$ and $i_1 < r$ such that $\langle P_1 \rangle_{\pm 1}^{[\infty]} \subseteq (c^{-1}\{i_1\})_1.$ Take any $Q_2 \in \fin_{\pm 2}^{[\infty]}$ such that $T(Q_2) = P_1$ and apply Theorem \ref{mainrel} to $Q_2$ to obtain $P_2 \leq Q_2$ and $i_2 < r$ such that $\langle P_2 \rangle_{\pm 2}^{[\infty]} \subseteq (c^{-1}\{i_2\})_1.$ Continue inductively to obtain $P_j \leq Q_j \in \fin_{\pm j}^{[\infty]}$ and $i_j < r$, for $j = 2, \dots, k$, such that $T(Q_j) = P_{j-1}$ and $\langle P_j \rangle_{\pm j}^{[\infty]} \subseteq (c^{-1}\{i_j\})_1.$

We claim that setting $P := P_k$ works. Indeed, for a fixed $j = 1, \dots, k$ we have $T^{(k-j)}(P) \leq P_j$ by construction (and using the general fact that $T(P) \leq T(Q)$ whenever $P \leq Q$) and so the desired conclusion follows from the choice of $P_j$.
\end{proof}

We conclude with a proof of the multi-dimensional version of Theorem \ref{mainthm}. Recall that, for $d \in \N$, $\fin_{\pm k}^{[d]}$ denotes the set of all block sequences in $\fin_{\pm k}$ of length $d$. Given an infinite block sequence $P$ let $\langle P \rangle_{\pm k}^{[d]}$ be the set of all $Q = (q_n)_{n < d} \in \fin_{\pm k}^{[d]}$ such that $q_n \in \langle P \rangle_{\pm k}$ for each $n < d$.

\begin{cor}\label{mainmulti} For every $k, d, r \in \N$ and every colouring $c : \fin_{\pm k}^{[d]} \rightarrow r$ there are $i < r$ and an infinite block sequence $P \in \fin_{\pm k}^{[\infty]}$ such that $$\langle P \rangle_{\pm k}^{[d]} \subseteq \left(c^{-1}\{i\}\right)_1.$$ \end{cor}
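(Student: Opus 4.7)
My plan is to reduce Corollary \ref{mainmulti} to Theorem \ref{mainthm} by lifting a colouring of $d$-block sequences to a colouring of infinite block sequences that depends only on the first $d$ coordinates. Given $c : \fin_{\pm k}^{[d]} \to r$, define $\widetilde{c} : \fin_{\pm k}^{[\infty]} \to r$ by $\widetilde{c}(P) := c(r_d(P))$, where $r_d(P) = (p_0, \dots, p_{d-1})$ is the restriction map used in defining the metrizable topology.

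First I would verify that $\widetilde{c}$ is Souslin measurable, which is in fact immediate: the set $\fin_{\pm k}^{[d]}$ is countable and, when equipped with the discrete topology, any colouring $c$ is continuous; since $r_d : \fin_{\pm k}^{[\infty]} \to \fin_{\pm k}^{[d]}$ is continuous by the very definition of the metrizable topology on $\fin_{\pm k}^{[\infty]}$ (with basic open sets indexed by finite restrictions), $\widetilde{c} = c \circ r_d$ is continuous and hence Souslin measurable. Then I would apply Theorem \ref{mainthm} to obtain $i < r$ and an infinite block sequence $P \in \fin_{\pm k}^{[\infty]}$ such that
\[
\langle P \rangle_{\pm k}^{[\infty]} \subseteq \left(\widetilde{c}^{\,-1}\{i\}\right)_1.
\]

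The remaining step is to transfer this conclusion from $\langle P \rangle_{\pm k}^{[\infty]}$ back down to $\langle P \rangle_{\pm k}^{[d]}$. Fix any finite block sequence $Q = (q_0, \dots, q_{d-1}) \in \langle P \rangle_{\pm k}^{[d]}$. Since $\langle P \rangle_{\pm k}$ contains elements with arbitrarily large support (for instance, each $p_n$ itself for $n$ large), we can pick $q_d < q_{d+1} < \dots$ in $\langle P \rangle_{\pm k}$ with $q_{d-1} < q_d$, producing an extension $Q^* = (q_n)_{n < \omega} \in \langle P \rangle_{\pm k}^{[\infty]}$. Applying the conclusion above to $Q^*$ yields $R \in \widetilde{c}^{\,-1}\{i\}$ with $\|Q^* - R\| \leq 1$. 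By definition of $\widetilde{c}$ this means $c(r_d(R)) = i$, and taking supremum only over the first $d$ coordinates gives $\|Q - r_d(R)\| \leq \|Q^* - R\| \leq 1$. Therefore $Q \in (c^{-1}\{i\})_1$, as required.

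I do not expect any serious obstacle here: the argument is the standard topological-Ramsey trick of passing from a finite-dimensional colouring to an infinite-dimensional one that forgets all but finitely many coordinates. The only mildly delicate point is the extension of $Q$ to an infinite block sequence inside $\langle P \rangle_{\pm k}^{[\infty]}$, but this is trivial since the partial semigroup generated by an infinite block sequence contains infinitely many elements beyond any fixed finite support.
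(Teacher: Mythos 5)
Your proposal is correct and follows essentially the same approach as the paper: lift $c$ to $\widetilde{c} = c \circ r_d$ on $\fin_{\pm k}^{[\infty]}$, observe that $\widetilde{c}$ is continuous (hence Souslin measurable) because its fibers are unions of basic clopen sets $[Q]$, apply Theorem \ref{mainthm}, and then transfer the conclusion back by extending any $Q \in \langle P \rangle_{\pm k}^{[d]}$ to an infinite block subsequence of $P$. The only cosmetic difference is that the paper phrases the extension as choosing an arbitrary $\widetilde{Q} \in \langle P \rangle_{\pm k}^{[\infty]} \cap [Q]$, whereas you explicitly construct the tail from later $p_n$'s; the substance is identical.
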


\begin{proof} Given a colouring $c$ as above, let $\widetilde{c} : \fin_{\pm k}^{[\infty]} \rightarrow r$ be given by $$\tilde{c}((p_n)_{n<\omega}) := c((p_n)_{n < d}).$$ Then $\widetilde{c}$ is continuous and hence Souslin measurable since for each $i < r$ we have $$\widetilde{c}^{-1}\{i\} = \bigcup \left\{[Q] : Q \in \fin_{\pm k}^{[d]} \, \cap \, c^{-1}\{i\}\right\}$$ (recall that $[Q]$ denotes the basic open set consisting of all infinite block sequences which begin with $Q$). By Theorem \ref{mainthm} there are $i < r$ and $P \in \fin_{\pm k}^{[\infty]}$ such that $$\langle P \rangle_{\pm k}^{[\infty]} \subseteq \left(\widetilde{c}^{-1}\{i\}\right)_1.$$ Given $Q = (q_n)_{n < d} \in \langle P \rangle_{\pm k}^{[d]}$ extend $Q$ arbitrarily to any $\widetilde{Q} \in \langle P \rangle_{\pm k}^{[\infty]} \cap [Q]$. By choice of $P$ there is $Q' = (q_n')_{n < \omega} \in \widetilde{c}^{-1}\{i\}$ such that $||q_n - q_n'|| \leq 1$ for all $n < \omega$. Then $c((q_n')_{n < d})= i$ and so $Q \in (c^{-1}\{i\})_1$.
\end{proof}

\bibliographystyle{amsplain}
\bibliography{bibliography}

\providecommand{\bysame}{\leavevmode\hbox to3em{\hrulefill}\thinspace}
\providecommand{\MR}{\relax\ifhmode\unskip\space\fi MR }
\providecommand{\MRhref}[2]{%
  \href{http://www.ams.org/mathscinet-getitem?mr=#1}{#2}
}
\providecommand{\href}[2]{#2}
\begin{thebibliography}{10}

\bibitem{AT}
S.~A. Argyros and S.~Todorcevic, \emph{Ramsey methods in analysis}, Advanced
  Courses in Mathematics, CRM Barcelona, Birkh\"{a}user Verlag, Basel, 2005.

\bibitem{B}
A.~Blass, \emph{Selective ultrafilters and homogeneity}, Ann. Pure Appl. Logic
  \textbf{38} (1988), no.~3, 215--255.

\bibitem{B1}
A.~Blass, \emph{Ultrafilters: where topological dynamics {$=$} algebra {$=$}
  combinatorics}, Topology Proc. \textbf{18} (1993), 33--56.

\bibitem{E}
E.~Ellentuck, \emph{A new proof that analytic sets are {R}amsey}, J. Symbolic
  Logic \textbf{39} (1974), 163--165.

\bibitem{G}
W.~T. Gowers, \emph{Lipschitz functions on classical spaces}, European J.
  Combin. \textbf{13} (1992), no.~3, 141--151.

\bibitem{H}
N. Hindman, \emph{Finite sums from sequences within cells of a partition of
  {$N$}}, J. Combinatorial Theory Ser. A \textbf{17} (1974), 1--11.

\bibitem{K}
V.~Kanellopoulos, \emph{A proof of {W}. {T}. {G}owers' {$c_0$} theorem}, Proc.
  Amer. Math. Soc. \textbf{132} (2004), no.~11, 3231--3242.

\bibitem{Ke}
A.~S. Kechris, \emph{Classical descriptive set theory}, Graduate Texts in
  Mathematics, vol. 156, Springer-Verlag, New York, 1995.

\bibitem{M}
K.~R. Milliken, \emph{Ramsey's theorem with sums or unions}, J. Combinatorial
  Theory Ser. A \textbf{18} (1975), 276--290.

\bibitem{T1}
S.~Todorcevic, \emph{Lecture notes in infinite-dimensional {R}amsey theory},
  Manuscript, University of Toronto, 1998.

\bibitem{T}
S.~Todorcevic, \emph{Introduction to {R}amsey spaces}, Annals of Mathematics Studies,
  vol. 174, Princeton University Press, Princeton, NJ, 2010.

\end{thebibliography}

\end{document}